\newtheorem{theo}{Theorem}
\newtheorem{defi}{Definition}
\newtheorem{lemma}{Lemma}
\newtheorem{inva}{Invariant}
\newtheorem{cor}{Corollary}
\title{The random strategy in Maker-Breaker graph minor games}
\date{\vspace{-5ex}}
\author{Ander Lamaison\thanks{Funded by the Deutsche Forschungsgemeinschaft (DFG, German Research Foundation) under Germany's Excellence Strategy - The Berlin Mathematics Research Center MATH+ (EXC-2046/1, project ID: 390685689).}}
\begin{document}
\maketitle

\begin{abstract}

In a $(1:b)$ biased Maker-Breaker game, how good a strategy is for a player can be measured by the bias range for which its rival can win, choosing an appropriate counterstrategy. Bednarska and \L{}uczak proved that, in the $H$-subgraph game, the uniformly random strategy for Maker is essentially optimal with high probability. Here we prove an analogous result for the $H$-graph minor game, and we study for which choices of $H$ the random strategy is within a factor of $1+o(1)$ of being optimal.
\end{abstract}

\section{Introduction}

A Maker-Breaker game is defined by a set $X$, called the board, and a family $\mathcal{F}$ of subsets of $X$, called the winning sets. Two players, called Maker and Breaker, claim elements of the board, in rounds. If the game has bias $(1:b)$, in every round Maker claims one element and Breaker claims $b$ elements. Once an element has been claimed, it cannot be claimed again. Maker wins if, after the board is completely claimed, some winning set has been fully claimed by Maker, otherwise Breaker wins.

We will consider that $X$ is the edge set of a complete graph $K_n$. Instead of a family $\mathcal{F}$, we will consider a monotone increasing graph property $\mathcal{P}$. A graph property is a family of graphs $\mathcal{P}=\cup_{i=0}^\infty\mathcal{P}_i$, where the graphs in $\mathcal{P}_n$ each have $n$ vertices. We say that $\mathcal{P}$ is monotone increasing if, for every $n$, if $G_1$ and $G_2$ are two graphs on $n$ vertices and $G_1\subseteq G_2$, then $G_1\in\mathcal{P}$ implies $G_2\in\mathcal{P}$. We denote by $M$ the graph formed by Maker's edges at the end of the game. In the game on $n$ vertices defined by $\mathcal{P}$, Maker wins if $M$ is in $\mathcal{P}_n$. We will be interested in the behavior of the game as $n$ goes to infinity for a fixed graph property $\mathcal{P}$.

We define the threshold bias of the game, denoted by $b^*=b^*(n,\mathcal{P})$, as the minimum value such that Breaker has a winning strategy in the $(1:b^*)$ biased game. We will omit one or both parameters if the number of vertices or the graph property defining the game are clear.

A possible strategy for Maker is to play randomly in every round. That is, every time that Maker needs to claim an edge, the choice is made uniformly at random from the set of unclaimed edges. A similar strategy can be used by Breaker. We will add the prefix {\tt Random}- to the name of a player to denote that it is following the random strategy, or {\tt Clever}- to indicate that it is following the strategy that maximizes its chances of winning.

To be precise, when both players are {\tt Clever}- the player who has a winning strategy always wins. When only one of the players is {\tt Clever}- (say {\tt RandomMaker} versus {\tt CleverBreaker}), for any $\mathcal{P}$, $n$ and $b$, each deterministic strategy by {\tt CleverBreaker} has a probability of winning against Maker's random strategy. {\tt CleverBreaker} follows the deterministic strategy that maximizes said probability.

If one or both players follow the random strategy, the winner of the game is not defined deterministically. Instead, each player has a probability of winning. We change the definition of threshold to account for this:

\begin{defi}
Let $X,Y\in\{{\tt Random-},{\tt Clever-}\}$ (or $\{R,C\}$ for short). Let $\mathcal{P}$ be a monotone increasing graph property. We say that a function $f(n)$ is a threshold (for $\mathcal{P}$) if:
\begin{itemize}
\item $\Pr\left({\tt XMaker}\text{ wins against }{\tt YBreaker}\right)\rightarrow 1$ for $b=o(f(n))$.
\item $\Pr\left({\tt XMaker}\text{ wins against }{\tt YBreaker}\right)\rightarrow 0$ for $b=\omega(f(n))$.
\end{itemize}
Similarly, we say that $f(n)$ is a sharp threshold (for $\mathcal{P}$) if for every $\epsilon>0$ we have 
\begin{itemize}
\item $\Pr\left({\tt XMaker}\text{ wins against }{\tt YBreaker}\right)\rightarrow 1$ for $b=(1-\epsilon)f(n)$.
\item $\Pr\left({\tt XMaker}\text{ wins against }{\tt YBreaker}\right)\rightarrow 0$ for $b=(1+\epsilon)f(n)$.
\end{itemize}
\end{defi}

We will sometimes use the notation $b^*_{XY}\sim f(n)$ or $b^*_{XY}\approx f(n)$ as shorthand to indicate that the function $f(n)$ is a threshold or a sharp threshold, respectively, in the {\tt XMaker}-{\tt YBreaker} game. We will write $b^*_{XY}\sim b^*_{X'Y'}$ if there exists a function with $b^*_{XY}\sim f(n)\sim b^*_{X'Y'}$, and we will write similarly $b^*_{XY}\approx b^*_{X'Y'}$ if a common sharp threshold exists. It is important to note we do not treat $b^*_{XY}$ as a function, and $b^*_{XY}$ will only be used to define the asymptotic behavior of the thresholds: if we have $f_1\sim b^*_{XY}\sim f_2$ and $g_1\approx b^*_{XY}\approx g_2$, we have $f_2=\Theta(f_1)$ and $g_2=(1+o(1))g_1$.

When comparing the different thresholds that are obtained in this way, two phenomena have been observed to arise in different games. For some properties $\mathcal{P}$, we have $b^*_{CC}\sim b^*_{RR}$ or $b^*_{CC}\approx b^*_{RR}$, meaning that two clever players get roughly the same advantage from a certain bias as two random players. This phenomenon is called \textit{probabilistic intuition}. If both players play randomly, the resulting graph $M$ is the Erd\H{o}s-R\'enyi random graph with a prescribed number of edges, $G\left(n,\left\lceil\frac{{n \choose 2}}{b+1}\right\rceil\right)$. Probabilistic intuition holds for example in the Hamiltonicity game \cite{Kri} (Maker wins if $M$ is Hamiltonian, $b^*_{CC}\approx \frac{n}{\log n}\approx b^*_{RR}$), the connectivity game \cite{GebSza} (Maker wins if $M$ is connected, $b^*_{CC}\approx \frac{n}{\log n}\approx b^*_{RR}$), and, in the weaker sense, the non-planarity game \cite{HKSS} (Maker wins if $M$ is non-planar, $b^*_{CC}\approx \frac n2\sim b^*_{RR}$).

The other phenomenon concerns the case when $b^*_{CC}\sim b^*_{RC}$ or $b^*_{CC}\approx b^*_{RC}$, where intuitively the random strategy for Maker is close to being optimal. For example, given a fixed graph $H$, for the $H$-subgraph game (Maker wins if $H$ is a subgraph of $M$) Bednarska and \L{}uczak \cite{BedLuc} proved that $b^*_{CC}\sim n^{1/m_2(H)}\sim b^*_{RC}$, where \[m_2(H)=\min\limits_{\substack{H'\subseteq H\\ v(H')\geq 3}}\frac{e(H')-1}{v(H')-2}.\] This result was generalized in \cite{KRSS} to the hypergraph setting.

A graph $G$ is said to have $H$ as a minor if there exists a family of pairwise disjoint and non-empty sets $S_v\subseteq V(G)$ for each $v\in V(H)$ such that $G[S_v]$ is connected for every $v$, and whenever $vw\in E(H)$ there exists an edge in $G$ between $S_v$ and $S_w$. In the $H$-minor game, Maker's goal is to ensure that $H$ is a minor of $M$. In other words, this is the Maker-Breaker game where $\mathcal{P}$ is the family of graphs having $H$ as a minor.

We will prove the optimality of the random strategy in this game, in the weak sense of $b^*_{CC}\sim b^*_{RC}$. Moreover, for every (fixed) choice of $H$ we will find a sharp threshold for the {\tt CleverMaker}-{\tt CleverBreaker} game, and prove that in many cases it is sharply matched in the {\tt RandomMaker}-{\tt CleverBreaker} game:

\begin{theo}\label{main} Let $H$ be a fixed graph, and let $\tau(H)$ denote the maximum number of edges in a component of $H$. In the $H$-minor game: \begin{enumerate}[label=(\roman*)]\item $b^*_{CC}\approx \frac{n^2}{2e(H)-2}\approx b^*_{RC}$, if $\tau(H)=1$.\item $b^*_{CC}\approx 2n\approx b^*_{RC}$, if $\tau(H)=2$.\item $b^*_{CC}\approx n\sim b^*_{RC}$, if $H$ is a forest and $\tau(H)\geq 3$.\item $b^*_{CC}\approx \frac n2\approx b^*_{RC}$, if $H$ is not a forest.\end{enumerate}\end{theo}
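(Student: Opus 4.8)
The plan is to prove the Breaker side and the Maker side of each regime separately and glue them with the trivial monotonicity $b^*_{CC}\le b^*_{RC}$: a clever Maker does at least as well as a random one against any fixed Breaker strategy, so any bias at which {\tt RandomMaker} beats {\tt CleverBreaker} whp is a bias at which {\tt CleverMaker} wins. Hence for each regime it suffices to give (B) a {\tt CleverBreaker} strategy winning for $b$ above the stated value, and (M) a Maker strategy winning for $b$ below it, realised by {\tt RandomMaker} up to a factor $1-\epsilon$ in the ``$\approx$'' cases; in case (iii) it is enough to realise (M) by {\tt RandomMaker} only for $b=o(n)$ (which gives $b^*_{RC}\sim n$) and separately by a direct {\tt CleverMaker} strategy for $b=(1-\epsilon)n$ (which gives $b^*_{CC}\approx n$), the Breaker side of $b^*_{RC}$ then following from (B). Everything rests on a ``minimal obstruction'' dictionary for $H\preceq M$: any minor model of $H$ uses at least $e(H)$ distinct edges of $M$; if $\tau(H)\ge2$ then $M$ must contain a connected subgraph on $\ge3$ vertices (to host a $P_3$-minor); if $H$ is a forest with $\tau(H)\ge3$ then $M$ must contain a connected subgraph on $\ge4$ vertices (since the largest tree-component of $H$, having $\ge3$ edges, contains $P_4$ or $K_{1,3}$, and any tree on $\ge4$ vertices forces such a subgraph); and if $H$ is not a forest then $M$ must contain a cycle (to host a $C_g$-minor, $g$ the girth of $H$). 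Conversely, once $n$ is large, $M\supseteq$ (disjoint copies of the tree-components of $H$), or, in case (iv), $M\supseteq$ (a large subdivision of $H$), already yields $H\preceq M$, the isolated vertices of $H$ going anywhere.

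\textbf{Breaker side.} Case (i) is immediate: if $b\ge(1+\epsilon)\frac{n^2}{2e(H)-2}$ the board is exhausted before Maker's $e(H)$-th move, so $M$ has fewer than $e(H)$ edges and no $H$-minor. In case (ii), as soon as $b\ge 2(n-2)$ {\tt CleverBreaker} keeps $M$ a matching by claiming, right after Maker takes an edge $uv$, every remaining edge at $u$ or $v$; this kills every component of size $3$, and needs bias only $2n-O(1)$. In cases (iii) and (iv) I would use an amortised/potential strategy. For (iv), Breaker keeps $M$ a forest: whenever a Maker edge merges $M$-trees $T_1,T_2$ into $T$, Breaker becomes responsible for the $|V(T_1)|\,|V(T_2)|-1$ not-yet-claimed $T_1$--$T_2$ edges, and pays these ``debts'' over several rounds, always prioritising edges incident to the most recently grown vertices; since $b=(1+\epsilon)\frac n2$ gives Maker only $m\approx\frac n{1+\epsilon}$ edges, the total debt is $\sum_i\binom{n_i}{2}\le\binom{m+1}{2}\approx\frac{m^2}{2}\le\frac{n^2}{2(1+\epsilon)^2}$, comfortably below Breaker's total budget $bm\approx\frac{n^2}{2(1+\epsilon)}$, and the $(1+\epsilon)$ slack is exactly what lets the priority scheme stay ahead of Maker so that no cycle ever closes. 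Case (iii) is analogous with ``$M$ is a forest'' replaced by ``every component of $M$ has $\le3$ vertices'': Breaker is responsible for sealing all edges leaving each small component (which also forbids the dangerous merges), and the relevant quantity is the number of edges incident to the $\le n$ vertices Maker ever touches, again $\le(1+o(1))\frac{n^2}{2}<(1+\epsilon)bm$; the only real work is scheduling the seals of the centre and leaves of a freshly formed $P_3$ before Maker can extend it, which is feasible because Maker makes only $O(1)$ moves while Breaker clears one vertex.

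\textbf{Maker/Random side.} Cases (i) and (ii) admit sharp direct arguments. For (i), when $b\le(1-\epsilon)\frac{n^2}{2e(H)-2}$ we have $\lceil\binom n2/(b+1)\rceil\ge e(H)$, and {\tt RandomMaker}'s first $e(H)$ edges form a matching whp: at each of these $O(1)$ rounds the edges meeting a previously chosen vertex number $O(n)$, a $o(1)$-fraction of the at least $\epsilon\binom n2-O(n)$ unclaimed edges, so Maker misses them with probability $1-o(1)$. For (ii), with $b\le(1-\epsilon)2n$ one shows {\tt RandomMaker} closes many disjoint cherries: after Maker has $t$ edges there remain $\Omega(tn\epsilon)$ unclaimed edges incident to Maker's vertices (Breaker's $tb<2tn$ edges cannot cover them all), so the per-round chance of closing a new $P_3$ is $\Omega(t\epsilon/n)$, and $\sum_{t\le\epsilon n}t\epsilon/n$ diverges precisely when $\epsilon$ is a fixed positive constant, yielding whp as many disjoint $P_3$'s and disjoint edges as the fixed graph $H$ needs; for $b=2n$ or larger the isolation strategy above shows Breaker wins, so the threshold is sharp. (The matching {\tt CleverMaker} strategies — extend a matching, resp.\ build each $P_3$ by extending a just-created $P_2$ on the very next move, using that Breaker with $b<2n$ cannot seal both endpoints in one round — are even easier and confirm the same thresholds.) For (iv), {\tt RandomMaker} with $b\le(1-\epsilon)\frac n2$ obtains $m\ge(1+\delta)n$ edges and, by a Bednarska--\L{}uczak-type coupling, $M$ whp contains a subgraph distributed as $G(n,m')$ with $m'=\Omega(m)\ge(1+\delta')\frac n2$; this is robustly supercritical, its giant component whp has $K_t$ as a minor for every fixed $t$, so $M$ whp contains every fixed non-forest $H$ as a minor. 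For (iii) with $b=o(n)$ the same coupling gives $m'=\omega(n)$, so $M$ is densely supercritical and whp contains disjoint copies of every tree-component of $H$; and for $b=(1-\epsilon)n$ a direct {\tt CleverMaker} strategy builds a minor model of $H$ port by port — a degree-$d$ vertex of $H$ hosted on a $d$-vertex path of $M$ — using that Breaker with bias $<n$ cannot seal a single vertex in one round, so Maker always routes the next port to a fresh Breaker-light vertex and finishes the $O(1)$-size model before Breaker can react decisively.

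\textbf{Main obstacle.} I expect the heart of the proof to be (a) making the amortised Breaker strategies of (iii)--(iv) rigorous: the counting leaves essentially no room, so the scheduling of Breaker's debts must be pinned down so that the reactive lag is never exploitable, and this is where the exact constants $n$ and $\frac n2$ are decided; and (b) in case (iv), ensuring the Bednarska--\L{}uczak coupling loses at most a factor $2-o(1)$ rather than an unspecified constant, since a worse loss could drop $m'$ below $n/2$, where a subcritical $M$ — components trees and unicyclic graphs — would be $K_4$-minor-free and the argument for general non-forest $H$ would collapse; the saving grace is that here $m$ already exceeds $n$, a full factor-$2$ cushion, but it has to be used. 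Finally, the reason $b^*_{RC}$ is only a (non-sharp) threshold in case (iii) is that near $b=n$ the random Maker graph is only barely supercritical and {\tt CleverBreaker}'s local isolation degrades it enough that one does not expect the random strategy to still force, say, a $P_4$-minor; the sharp constant there is genuinely a phenomenon of Maker's cleverness.
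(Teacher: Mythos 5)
Your overall architecture (regime-by-regime, Breaker upper bound plus Maker lower bound, glued by $b^*_{CC}\le b^*_{RC}$) matches the paper, and your treatments of cases (i) and (ii) are essentially the paper's own arguments (edge counting plus ``first $e(H)$ random edges form a matching''; degree-capping Breaker plus random cherry-building). But in the two hard cases your proposed strategies have genuine flaws, not just missing details. For the Breaker side of (iii), ``keep every component of $M$ on at most $3$ vertices'' is simply not achievable at bias $(1+\epsilon)n$: after Maker extends at a freshly added vertex, the two newest vertices of its component carry about $2n$ free incident edges while Breaker can claim only about $(1+\epsilon)n$ per round, so Maker can always extend again and reach a component of size $4$ within three moves; your aggregate count ($\approx n^2/2$ versus budget $\approx n^2/2$) has no slack and in any case the obstruction is per-round, not global. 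The paper's point is precisely that at bias $n$ Breaker cannot bound component size and must instead bound \emph{structure}: it prevents $K_{1,3}$ via the degree lemma, or prevents $P_4$ by forcing every Maker component to be a star with a distinguished head (sealing the isolated endpoint and the head's edges to other non-isolated heads costs only $\le n$ per round), and then uses that every Class-3 graph $H$ contains $K_{1,3}$ or $P_4$ and that these are inherited by any graph with an $H$-minor. Similarly, your CleverMaker argument at $(1-\epsilon)n$ (``Breaker cannot seal a vertex in one round, so Maker finishes the $O(1)$-size model before Breaker reacts'') is not an argument: the construction takes a bounded but large number of rounds, Breaker reacts every round and can concentrate its $(1-\epsilon)n$ edges on the currently poorest branch set, so one must show no branch set is ever starved while new ones keep being founded. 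This is the technical heart of the paper: a box-game potential with the harmonic-number condition $\sum_{j\in S}f_j> |S|h_{|S|}(1-\tfrac\epsilon2)n$, maintained through a case analysis, is exactly what makes the sharp constant $n$ come out.

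Case (iv) has the same two problems in sharper form. Your debt-paying ``keep $M$ a forest'' Breaker strategy is exploitable: Maker can grow two components of size about $\sqrt{n}$ (each growth step creates only $O(\sqrt n)$ debt, which you pay), never touching the $\Theta(n)$ cross edges between them, and then merge them; at that moment far more than $b\approx n/2$ free edges become internal to one component, and Maker closes a cycle on its very next move. Aggregate budget counting cannot detect this; the paper instead invokes the Bednarska--Pikhurko theorem for cycle prevention at $b=\lfloor (n-1)/2\rfloor$, whose strategy is not of this naive reactive type. On the Maker side, the Bednarska--\L{}uczak-type coupling you invoke loses too much here: playing uniformly among edges not yet picked and discarding picks already taken by Breaker, by round $\theta m$ a pick fails with probability about $\theta$, so the surviving ``random'' edges number roughly $\theta(1-\theta/2)m$ at best, and after subtracting the (adversarial) failed picks one cannot guarantee more than about $0.27n$ usable random edges when $m\approx(1+\epsilon)n$ --- below the critical $n/2$, and the deletions are adversarial exactly where resilience fails. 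So your own obstacle (b) is fatal for that route; the paper avoids it by citing Krivelevich's theorem that {\tt RandomMaker} at bias $(1-\epsilon)\frac n2$ whp builds a $K_{c\sqrt n}$ minor, which settles the Maker side of (iv) and also supplies the $b^*_{RC}\sim n$ lower bound in case (iii) (your $b=o(n)$ coupling argument works there, but the paper gets the stronger $\Theta(n)$ bound for free from the same citation).
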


Note that, if $H$ is a forest with $\tau(H)\geq 3$, then we only claim $b^*_{RC}\sim n$. This is not an artifact of the proof: there are choices of $H$ where $b^*_{RC}\not\approx n$, with the path on eleven vertices being an explicit example.

\begin{theo}\label{pathrmcb} Let $P_{11}$ be a path on eleven vertices. In the $P_{11}$-minor game with bias $b=0.99 n$, {\tt CleverBreaker} wins with high probability against {\tt RandomMaker}.\end{theo}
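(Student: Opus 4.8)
The plan is to exhibit an explicit strategy for {\tt CleverBreaker} and to show that it wins with high probability. The first step is the standard reduction: since $P_{11}$ has maximum degree $2$, a graph contains $P_{11}$ as a minor if and only if it contains a path on $11$ vertices as a subgraph; hence the minor condition coincides with the $P_{11}$-subgraph condition, and Breaker's goal is to keep Maker's graph $M$ \emph{$P_{11}$-free} throughout.

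The strategy I have in mind is a \emph{freeze-and-seal} strategy built on the component and path structure of $M$. For a vertex $v$ let $\lambda(v)$ be the number of vertices on a longest path of $M$ ending at $v$; then $M$ is $P_{11}$-free exactly when $\lambda(v)\le 10$ for all $v$, and playing an edge $uv$ can complete a path on $11$ vertices only when $\lambda(u)+\lambda(v)\ge 11$ with the two extremal arms choosable to be disjoint. Breaker maintains the stronger invariant that every component of $M$ has at most $10$ vertices and, moreover, that no component has yet built up too much \emph{path potential} (made precise through a bound on the profile recording how many of a component's vertices have each value of $\lambda$). Concretely, after each Maker move Breaker (i) claims every edge between two components whose sizes sum to at least $11$ --- this already forces every component of $M$ onto at most $10$ vertices, since a Maker edge can then only merge two components of total size at most $10$ --- and (ii) as soon as some component threatens the invariant, Breaker devotes its next few rounds to \emph{sealing} that component, i.e.\ claiming all $O(n)$ edges between it and the rest of $K_n$, after which that component is inert. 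Keeping the threshold bounded is what makes this affordable: a component is sealed while still of bounded size, so each sealing costs only $O(n)$ edges.

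The heart of the proof is a probabilistic argument --- via the differential-equation method together with Azuma--Hoeffding concentration --- that these obligations never outrun Breaker's per-round budget $b=0.99n$ and that no component is \emph{attacked} before Breaker finishes sealing it. Since {\tt RandomMaker} always plays a uniformly random unclaimed edge, one couples the pair (Maker's graph $M_t$, the residual graph $K_n$ minus Breaker's edges) with a truncated random-graph process, tracks the number of components of each size and the number of vertices of each path-depth $\lambda$, and deduces the rate at which components cross the dangerous threshold; the key estimates are that with high probability this rate times the $O(n)$ cost per sealing stays below $b$, so Breaker's backlog stays bounded, and that a fixed bounded-size component is hit by one of Maker's random edges during the few rounds in which it is being sealed with probability small enough to survive a union bound over all components. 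I expect the two genuine difficulties to be: (a) controlling the process across the near-critical window $t\approx n/(2\cdot 0.99)$, where component sizes and path-depths can jump within a single round, so that bursts of sealings are absorbed --- this is exactly where the gap between the constant $0.99$ and the {\tt CleverMaker}--{\tt CleverBreaker} threshold constant $1$ of Theorem~\ref{main}(iii) is spent, and where the target length $11$ enters, since a longer target path makes the dangerous configurations rarer and the sealings relatively cheaper; and (b) setting up the invariant so that {\tt RandomMaker}'s obliviousness makes the bad \emph{bridging} events non-compounding. Against a {\tt CleverMaker}, who would immediately extend a near-maximal path and so force $10n$-edge seals faster than Breaker could perform them, the same strategy fails --- consistent with Theorem~\ref{main}(iii) --- but a random edge cannot complete a $P_{11}$ before Breaker has pre-empted it.
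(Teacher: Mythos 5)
There is a genuine gap: your proposal is a plan whose single load-bearing step is exactly the part you defer to an unexecuted differential-equation/Azuma analysis, and the invariant you chose makes that step precisely a constant-factor race that you never settle. {\tt RandomMaker} ends the game with $\left\lceil\binom n2/(0.99n+1)\right\rceil\approx n/1.98>n/2$ edges, so the graph you propose to confine into components of at most $10$ vertices has average degree slightly above $1$; in the near-critical process you yourself couple with, a positive proportion of vertices, hence $\Theta(n)$ components, reach any fixed size threshold, so your ``sealing'' (at $\Theta(n)$ edges per component) costs $\Theta(n^2)$ in total --- the same order as {\tt CleverBreaker}'s entire budget $\approx 0.99n\cdot n/1.98\approx n^2/2$. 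Whether the constants fit, uniformly over the critical window where the crossing rate peaks and where you admit the difficulty lies, is the whole content of the theorem and is nowhere established. Two further concrete problems: rule (i), read literally (``claim every edge between two components whose sizes sum to at least $11$'' after each Maker move), can demand $\omega(n)$ claims in a single round as soon as two linear-sized families of mid-sized components coexist, so you would first have to prove such configurations never arise; and sealing has an unaddressed feedback --- sealed vertices leave circulation, so Maker's later uniform edges concentrate on the shrinking active set, pushing it further above criticality and increasing the rate at which new components need sealing. Nothing in the proposal shows this loop closes under the per-round budget $0.99n$.

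For contrast, the paper's Breaker never attempts to bound component sizes, which is what makes your accounting so delicate. It plays a short first phase ($0.03n$ rounds) whose only goals are to keep the components created so far to at most $4$ vertices and to equalize, up to $0.001n$, the free degrees of the still-isolated vertices $I$; in the second phase it forces every new component inside $I$ to be a star with at most one Maker edge into $J=V\setminus I$, sealing an $I$-component only at the moment it touches $J$. Stars may grow arbitrarily large, but a path collects at most $3$ vertices from a star, at most $4$ from a $J$-component, and at most $3$ from a second star --- $10$ in all --- so no $P_{11}$ appears and no $\Theta(n)$-many full sealings are ever needed. To salvage your route you would have to either carry out the full near-critical analysis with explicit constants (including the feedback above), or replace ``bounded component size'' by a shape invariant of this star-like kind.
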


A consequence of this result is that the theorem of Bednarska and \L{}uczak which shows that $b^*_{CC}\sim b^*_{RC}$ in the $H$-subgraph game cannot always be strengthened to $b^*_{CC}\approx b^*_{RC}$:

\begin{cor}\label{bedlucnot} Let $H$ be a path with at least ten edges or a three-legged spider (three disjoint paths meeting at a common endpoint) containing a path with ten edges as a subgraph. Then, in the $H$-subgraph game, $b^*_{CC}\approx n\not\approx b^*_{RC}$.\end{cor}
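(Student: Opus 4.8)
The plan is to deduce Corollary \ref{bedlucnot} from Theorems \ref{main} and \ref{pathrmcb} by observing that, for the graphs $H$ in the statement, the $H$-subgraph game is literally the same game as the $H$-minor game, and then invoking the trivial monotonicity that forbidding $P_{11}$ as a subgraph forbids every graph that contains $P_{11}$. So the first thing I would verify is that a graph $G$ contains $H$ as a subgraph if and only if it contains $H$ as a minor, both for $H=P_k$ with $k\geq 11$ vertices and for $H=S$ a three-legged spider with $e(S)\geq 10$. For a path, $G$ has $P_k$ as a minor iff $G$ has a path on at least $k$ vertices --- follow a walk through consecutive branch sets, using connectivity inside each set and disjointness between sets to extract a genuine path --- and this is the same as $G$ containing a copy of $P_k$. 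For a three-legged spider, $\Delta(S)=3$, so having $S$ as a minor is equivalent to having $S$ as a topological minor (the classical fact that minors and topological minors coincide in maximum degree at most three), and any subdivision of $S$ is again a three-legged spider whose three legs are at least as long as those of $S$, hence contains $S$ as a subgraph after truncating its legs; thus having a subdivision of $S$ as a subgraph is equivalent to having $S$ itself as a subgraph. In both cases ``contains $H$ as a subgraph'' and ``contains $H$ as a minor'' define the same monotone increasing graph property, so the $H$-subgraph and $H$-minor games coincide for every $n$, every bias, and every combination of {\tt Clever-}/{\tt Random-} players.

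Granting this, the two halves of the corollary follow at once. Each $H$ in the statement is a tree, so $\tau(H)=e(H)\geq 10\geq 3$, and Theorem \ref{main}(iii) gives $b^*_{CC}\approx n$ (and $b^*_{RC}\sim n$) in the $H$-minor game, hence in the $H$-subgraph game. For the strict inequality, note that every such $H$ contains $P_{11}$ as a subgraph --- immediate for a path, and true by hypothesis for a spider containing a path with ten edges --- so any graph without $P_{11}$ as a subgraph also lacks $H$, and therefore any Breaker strategy that keeps Maker's graph $P_{11}$-free also keeps it $H$-free. Since {\tt RandomMaker} plays the same uniformly random moves regardless of her target, the {\tt CleverBreaker} strategy provided by Theorem \ref{pathrmcb} for the $P_{11}$-minor game (equivalently, the $P_{11}$-subgraph game) beats {\tt RandomMaker} with high probability at bias $b=0.99n$ in the $H$-subgraph game too. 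Hence $\Pr({\tt RandomMaker}\text{ wins against }{\tt CleverBreaker})\to 0$ at $b=0.99n=(1-0.01)n$, so $n$ is not a sharp threshold for the {\tt RandomMaker}--{\tt CleverBreaker} game; together with $b^*_{CC}\approx n$ this is exactly $b^*_{CC}\approx n\not\approx b^*_{RC}$.

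The one step I expect to require genuine care is the subgraph/minor equivalence, and within it the spider case: one has to invoke (or reprove) that minors and topological minors coincide in maximum degree at most three, and then check the elementary but slightly delicate point that every subdivision of a three-legged spider truncates back to that spider --- this uses crucially that a three-legged spider has a single vertex of degree greater than two. Everything else is a direct citation of Theorems \ref{main} and \ref{pathrmcb} plus the one-line monotonicity observation.
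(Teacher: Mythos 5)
Your proposal is correct and follows essentially the same route as the paper: the paper's own (very brief) argument is exactly the observation that for paths and three-legged spiders, containing $H$ as a minor is the same as containing $H$ as a subgraph, so Theorem \ref{main}(iii) gives $b^*_{CC}\approx n$ and Theorem \ref{pathrmcb} (via $P_{11}\subseteq H$ and the target-independence of the random strategy) rules out $n$ as a sharp threshold for {\tt RandomMaker} versus {\tt CleverBreaker}. You simply spell out the minor/subgraph equivalence (path extraction through branch sets, and the degree-at-most-three topological-minor argument for the spider) which the paper states as an observation without proof.
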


A related notion is that of a topological minor. A graph $H'$ is a subdivision of $H$ if it can be obtained from $H$ by repeatedly applying the following procedure: take an edge $uv$ in the graph and replace it with a path $uwv$, where $w$ is a new vertex. We say that $H$ is a topological minor of $G$ if $G$ contains a subdividion of $H$ as a subgraph. 

Let $H$ be a graph. In the $H$-subdivision game, Maker wins if $H$ is a topological minor of $M$. If $\Delta(H)\leq 3$, then $H$ is a topological minor of $M$ if and only if $H$ is a minor of $M$, so we will focus on the case $\Delta(H)\geq 4$. For this game, we will show that the maximum bias for which {\tt RandomMaker} and {\tt CleverMaker} win are within at most a factor of two from each other:

\begin{theo}\label{subdiv} Let $H$ be a graph with $\Delta(H)\geq 4$. In the $H$-subdivision game, {\tt CleverBreaker} wins against {\tt CleverMaker} for bias $b=\frac{2n}{\Delta(H)-1}$, while {\tt Random\-Maker} wins with high probability against {\tt CleverBreaker} for bias $b=\frac{(1-\epsilon)n}{\Delta(H)-1}$, for every fixed $\epsilon>0$. In particular, $b^*_{CC}\sim\frac{n}{\Delta(H)-1}\sim b^*_{RC}$.\end{theo}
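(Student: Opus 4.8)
\noindent\emph{Proof plan.} Write $d=\Delta(H)$. Both halves rest on the observation that every subdivision of $H$ contains a branch vertex of degree $d$, so $M$ can contain a subdivision of $H$ only when $\Delta(M)\ge d$; hence the Breaker side is a pure maximum-degree game, while on the Maker side a vertex of degree $\ge d$ comes almost for free from an edge count and the real work is to route the paths of the subdivision.

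For the Breaker side, with bias $b=\tfrac{2n}{d-1}$ I would have Breaker answer each Maker move $uv$ by claiming, at each of $u$ and $v$, exactly $\lceil\tfrac{n-d}{d-1}\rceil$ currently unclaimed incident edges (or all of them if fewer remain). This costs at most $2\lceil\tfrac{n-d}{d-1}\rceil\le\tfrac{2(n-1)}{d-1}\le b$. I then prove by induction the invariant that every vertex $w$ either has no unclaimed incident edge or satisfies $d_M(w)\le d-2$ and $d_B(w)\ge\tfrac{n-d}{d-1}d_M(w)$ (with $d_M,d_B$ the current Maker- and Breaker-degrees): the last inequality survives because each unit increase of $d_M(w)$ is matched by adding $\lceil\tfrac{n-d}{d-1}\rceil$ to $d_B(w)$, unless Breaker instead removes all remaining free edges at $w$. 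The invariant forces that once $d_M(w)$ reaches $d-2$ at most $\tfrac{n-d}{d-1}$ edges at $w$ remain unclaimed, so if Maker plays at $w$ again then after Breaker's reply $w$ has no free edge and $d_M(w)=d-1$. Thus $\Delta(M)\le d-1$ and Breaker wins.

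For the Maker side, fix $\epsilon>0$ and $b=\tfrac{(1-\epsilon)n}{d-1}$, so RandomMaker claims $m=\lceil\binom n2/(b+1)\rceil$ edges and $2e(M)=2m\ge\tfrac{n(n-1)}{b+1}>(d-1)n$ for $n$ large. A first-moment bound on the chance that RandomMaker picks an edge at a given vertex in a given round shows $\Delta(M)=O(\log n)$ w.h.p.\ against every Breaker; together with $2e(M)>(d-1)n$ this forces w.h.p.\ at least $\Omega(n/\log n)$ vertices of degree $\ge d$ in $M$, in particular more than $v(H)$ of them. To assemble the subdivision I would reuse the analysis of RandomMaker's graph from Theorem~\ref{main}: since Maker's graph has $\Theta(n)$ edges, each chosen almost uniformly, the same argument as for the (less delicate) $H$-minor game gives that, for every Breaker strategy, $M$ w.h.p.\ contains a linear-sized subgraph $M_0$ of minimum degree $\ge 2$ whose expansion matches that of the $2$-core of $G(n,c/n)$ for a fixed $c>1$ and that still contains $v(H)$ of the degree-$\ge d$ vertices of $M$. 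Taking those as branch vertices, reserving $\deg_H(x)$ incident edges as stubs at the branch vertex of each $x\in V(H)$, and then joining the prescribed pairs of stubs, one edge of $H$ at a time, by internally disjoint paths of length $O(\log n)$ inside $M_0$ --- which is possible because deleting the $O(\log n)$ internal vertices of earlier paths leaves the expansion of $M_0$ intact --- produces a subdivision of $H$ in $M$, so RandomMaker wins w.h.p.

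The main obstacle is the structural claim about $M$ in the last paragraph: Breaker is adaptive and uses almost all of the $bm\approx\binom n2$ available edges, so one must rule out that it, say, isolates the high-degree vertices or shatters $M$ into small pieces; this is precisely what the machinery behind Theorem~\ref{main} delivers. The factor $2$ between $\tfrac{2n}{d-1}$ and $\tfrac{(1-\epsilon)n}{d-1}$ is genuine --- the exact constant of the maximum-degree game is not pinned down --- which is why the conclusion is only $b^*_{CC}\sim\tfrac n{\Delta(H)-1}\sim b^*_{RC}$ rather than a sharp threshold.
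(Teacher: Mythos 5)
Your Breaker half is fine and is essentially the paper's argument: since any subdivision of $H$ forces $\Delta(M)\geq\Delta(H)$, Breaker only needs to keep Maker's maximum degree below $\Delta(H)$, and your pairing of each Maker edge with $\bigl\lceil\tfrac{n-d}{d-1}\bigr\rceil$ Breaker edges at each endpoint is the strategy of Lemma \ref{deglem}.

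The Maker half, however, has a genuine gap at exactly the point you flag as ``the main obstacle.'' The structural claim that, against every adaptive {\tt CleverBreaker} strategy, $M$ w.h.p.\ contains a linear-sized subgraph $M_0$ with expansion comparable to the $2$-core of $G(n,c/n)$, still containing $v(H)$ of the degree-$\geq d$ vertices, is not proved anywhere and is not ``precisely what the machinery behind Theorem \ref{main} delivers.'' For Class 4 that machinery is Krivelevich's Theorem \ref{kritheo}, whose conclusion is a large \emph{clique minor}; a clique minor gives no control on degrees and does not yield subdivisions of graphs with $\Delta(H)\geq 4$ (this is the whole reason the subdivision game has threshold $\approx n/(\Delta(H)-1)$ rather than $n/2$), and the Class 3 analysis is a {\tt CleverMaker} box-game strategy, irrelevant to {\tt RandomMaker}. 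So your plan replaces the hard step by an unproven expansion statement about Maker's random graph built against an adversary who claims almost all of $\binom n2$ edges, and even granting it, routing $e(H)$ internally disjoint $O(\log n)$-paths between prescribed stubs would still need a careful connectivity/rollback argument. The paper avoids all of this with Mader's theorem (Theorem \ref{mader}): it suffices to exhibit a subgraph of $M$ with girth at least $\ell(H,\delta)$ and average degree at least $\Delta(H)-1+\delta$, and such a subgraph $M'$ is built online by discarding any Maker edge that would touch a vertex of $M'$-degree $\geq\log n$ or close a short cycle; a simple count of free edges shows only $o(n)$ edges are ever discarded, so $M'$ has the required average degree. Your preliminary observations ($\Delta(M)=O(\log n)$ w.h.p.\ and hence $\Omega(n/\log n)$ vertices of Maker-degree $\geq d$) are plausible but do not substitute for this step; as written, the Maker side of your proof does not go through.
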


This paper will be organized as follows: we will discuss most cases of the $H$-minor game and prove Theorem \ref{main} in Section \ref{section2}, we will discuss the separation between the {\tt RandomMaker} and {\tt CleverMaker} settings and prove Theorem \ref{pathrmcb} and Corollary \ref{bedlucnot} in Section \ref{section3}, and discuss the $H$-subdivision game and prove Theorem \ref{subdiv} in Section \ref{section4}. 

\subsection{Notation}

We say ``at time $t$'' for ``after $t$ moves by each player''. We denote by $M_t$, $B_t$ and $F_t$ the graph formed by the edges claimed by Maker, claimed by Breaker and which remain free, respectively, at time $t$. We denote by $e_t$ the edge claimed by Maker on round $t$.

\section{Proof of Theorem \ref{main}}\label{section2}

We say that a graph is in Class $i$, $1\leq i\leq 4$, if it falls under the $i$-th case of Theorem \ref{main}. We will start by proving the theorem for Classes 1, 2 and 4.

\subsection{Class 1}

A graph on Class 1 consists of isolated vertices and isolated edges. A graph $G$ on $n$ vertices contains $H$ as a minor if and only if $n\geq v(H)$ and $G$ contains a matching of size $e(H)$. For $n$ large enough, we only need to check the matching condition.

Note that the number of edges in Maker's graph at the end of the game is $\left\lceil\frac{{n\choose 2}}{b+1}\right\rceil$, so if $b\geq \frac{n^2}{2e(H)-2}$ then Maker claims fewer than $e(H)$ edges, and therefore loses regardless of the strategy followed. In particular, {\tt CleverBreaker} wins against {\tt CleverMaker}.

If the game lasts for at least $t$ rounds, the probability that $e_t$ shares a vertex with one of the previous edges is at most $\frac{2(t-1)n}{{n \choose 2}-(b+1)(t-1)}$, since the previous edges involve at most $2(t-1)$ vertices. The probability that the first $t$ edges form a matching is at least \[\prod\limits_{i=1}^t\left(1-\frac{2(i-1)n}{{n\choose 2}-(b+1)(i-1)}\right)\geq\left(1-\frac{2(t-1)n}{{n \choose 2}-(b+1)(t-1)}\right)^t,\] which is $1-o(1)$ for $t=e(H)$ and $b=\frac{(1-\epsilon)n^2}{2e(H)-2}$ for any $\epsilon>0$.

\subsection{Class 2}

A graph on Class 2 consists of isolated vertices and paths of length 1 and 2. In particular, such a graph has $\Delta(H)=2$, and every graph $G$ with $H$ as a minor has $\Delta(G)\geq 2$. For bias $b=2n$, {\tt CleverBreaker} can prevent {\tt CleverMaker} from creating a vertex of degree at least 2. More generally, the following lemma allows {\tt CleverBreaker} to bound $\Delta(M)$:

\begin{lemma}\label{deglem} Let $k\geq 2$ be an integer. For bias $b=\frac{2n}{k-1}$, {\tt CleverBreaker} has a strategy to ensure $\Delta(M)<k$.\end{lemma}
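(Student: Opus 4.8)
The plan is to give Breaker an explicit \emph{local response} strategy. Set $c:=\left\lceil\frac{n-k}{k-1}\right\rceil$. Whenever Maker claims an edge $e_t=u_tv_t$, Breaker responds by claiming $c$ still-free edges incident to $u_t$ and $c$ still-free edges incident to $v_t$; if at some endpoint fewer than $c$ free edges remain, Breaker simply takes all of them. The only edge incident to both $u_t$ and $v_t$ is $e_t$ itself, which is no longer free, so these two batches are disjoint and Breaker claims at most $2c$ edges in the round. Since $2c\le \frac{2(n-k)}{k-1}+2=\frac{2n-2}{k-1}\le\frac{2n}{k-1}=b$, the strategy is legal.

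Next I would show that this strategy keeps $\Delta(M)<k$ by contradiction. Suppose Maker ever claims a $k$-th edge at some vertex $v$, on round $t_k$, and let $t_1<t_2<\cdots<t_{k-1}<t_k$ be the rounds on which Maker claimed its first $k-1$ edges at $v$. For each $i\le k-1$, the vertex $v$ is an endpoint of $e_{t_i}$, so immediately after round $t_i$ Breaker claimed, as part of its response at $v$, either $c$ edges that were free at that moment, or \emph{all} remaining free edges at $v$. In the latter case $v$ would have no free edge after round $t_i$, contradicting the fact that Maker claims a free edge at $v$ on the later round $t_k$; hence in every case Breaker claimed exactly $c$ previously-free edges at $v$ in response to each of $e_{t_1},\dots,e_{t_{k-1}}$. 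These $(k-1)c$ Breaker-edges at $v$ are pairwise distinct and distinct from Maker's edges at $v$. Counting the edges incident to $v$ at the start of round $t_k$: the $k-1$ Maker-edges, the $(k-1)c$ Breaker-edges, and the free edge Maker is about to claim give at least $(k-1)(c+1)+1$ distinct edges, so $(k-1)(c+1)+1\le n-1$, i.e. $c\le\frac{n-k-1}{k-1}<\frac{n-k}{k-1}\le c$, a contradiction. Therefore $\Delta(M)<k$ throughout the game.

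The conceptual content is the averaging observation that Maker needs $k-1$ moves incident to a vertex $v$ before it can threaten $v$, which buys Breaker $k-1$ full rounds of responses — a factor of roughly $\frac{n}{k-1}$ free edges per endpoint — enough to exhaust all $\approx n$ edges at $v$ before the threat materializes. The only real obstacle is the bookkeeping: verifying that the ceilings in the bound $2c\le b$ work out (which they do, with room to spare), and handling cleanly the boundary case in which an endpoint runs out of free edges before Breaker has spent its full quota there — reassuringly, this case only helps Breaker, since it permanently freezes that vertex's Maker-degree.
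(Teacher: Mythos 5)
Your proposal is correct and follows essentially the same strategy as the paper: Breaker answers each Maker edge by claiming roughly $\frac{n}{k-1}$ free edges at each endpoint, so that a vertex's incident edges are exhausted before its Maker-degree can reach $k$. The only differences are cosmetic — you use the constant $\left\lceil\frac{n-k}{k-1}\right\rceil$ instead of $\left\lfloor\frac{n}{k-1}\right\rfloor$ and verify the claim by a counting contradiction at a hypothetical offending vertex rather than by the paper's explicit invariant on $d_{B_t}(v)$.
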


\begin{proof} Let $e_t=uv$. {\tt Clever\-Breaker} claims $\left\lfloor\frac{n}{k-1}\right\rfloor$ free edges incident to each of $u$ and $v$ (if $d_{F_{t-1}}(u)\leq \left\lfloor\frac{n}{k-1}\right\rfloor$ or $d_{F_{t-1}}(v)\leq \left\lfloor\frac{n}{k-1}\right\rfloor$, {\tt Clever\-Breaker} claims all free edges). Following this strategy, we maintain the invariant $d_{B_t}(v)\geq\min\left\{d_{M_t}(v)\left\lfloor\frac{n}{k-1}\right\rfloor, n-1-d_{M_t}(v)\right\}$. If $d_{M_t}(v)=k-1$ we have $d_{M_t}(v)\left\lfloor\frac{n}{k-1}\right\rfloor\geq n-d_{M_t}(v)$, so $d_{M_t}(v)+d_{B_t}(v)=n-1$ and there is no free edge incident to $v$. Thus $d_{M_t}(v)$ can never reach $k$.
\end{proof}

We will now show that, for $b=(2-\epsilon)n$, {\tt RandomMaker} wins whp against {\tt CleverBreaker}. In fact, we will show that in the first $n^{2/3}$ rounds of the game  {\tt RandomMaker} manages to create $k$ disjoint paths of length two, which in particular contain $H$ as a subgraph if $k\geq v(H)$.

We define inductively the sets of edges $C_t$ and $D_t$, with $C_0$ and $D_0$ being empty, and then:

\begin{itemize}
\item If $e_{t+1}$ is vertex-disjoint with both $C_t$ and $D_t$, then set $C_{t+1}=C_t\cup\{e_{t+1}\}$, $D_{t+1}=D_t$.
\item If $e_{t+1}$ is vertex-disjoint with $D_t$ but not with $C_t$, let $e_j\in C_t$ be an edge sharing a vertex with $e_{t+1}$. Obtain $C_{t+1}$ from $C_t$ by removing every edge sharing a vertex with $e_{t+1}$, and let $D_{t+1}=D_i\cup\{e_j, e_{t+1}\}$.
\item If $e_{t+1}$ is not vertex disjoint with $D_t$, then set $C_{t+1}=C_t$ and $D_{t+1}=D_t$.
\end{itemize}

We can observe by induction that $C_t$ and $D_t$ are vertex-disjoint, that $C_t$ forms a family of vertex-disjoint edges, and $D_t$ forms a family of vertex-disjoint paths of length two. Our goal is to prove that, with high probability, $\left|D_{n^{2/3}}\right|\geq 2k$.

Observe that $|D_t|$ is non-decreasing. If $\left|D_{n^{2/3}}\right|<2k$ then $|D_t|<2k$ for every $0\leq t\leq n^{2/3}$. At time $t$ there are at least ${n \choose 2}-(b+1)n^{2/3}=\Omega(n^2)$ free edges from which {\tt RandomMaker} chooses randomly, and at most $3kn$ of them are not vertex-disjoint with $D_i$, meaning that the probability that {\tt RandomMaker} picks one of them is at most $O(n^{-1})$. With high probability, the edge $e_{t+1}$ is vertex-disjoint with $D_t$ for every $1\leq t\leq n^{2/3}$. We denote this event by $X$.

The size of $C_t$ decreases at most $k$ times, and on the rounds where it does we have $|C_t|-|C_{t+1}|\leq 2$. If $X$ holds then the size of $C_t$ does not remain constant, meaning that if in addition we have $|D_t|<2k$ then $|C_t|\geq t-3k$ for every $1\leq t\leq n^{2/3}$. 

The number of edges with one endpoint incident to $C_t$ and the other outside of both $C_t$ and $D_t$ is at least $(2t-3k)(n-2t)=(2-o(1))tn$, out of which both players have claimed at most $\left(2-\frac\epsilon 2\right)tn$, and thus at least $\frac{\epsilon}{4}tn$ of which are free. On each round $\frac12n^{2/3}\leq t\leq n^{2/3}$, the probability that {\tt RandomMaker} chooses one of these free edges is at least $\frac{\frac \epsilon4tn}{{n \choose 2}-2tn}\geq \frac{\epsilon}{8}n^{-1/3}$. By Chernoff's bound, the probability that at most $k$ times one of these edges is selected is at most $e^{-\Omega\left(m^{1/3}\right)}$. With high probability, this is not the case, so with high probability $|D_{n^{2/3}}|>3k$.

\subsection{Class 4}

The upper bound for this class is implied by a result of Bednarska and Pikhurko \cite{BedPik}, and the lower bound follows a result by Krivelevich \cite{KriPre}, in which a much larger minor is created:

\begin{theo} For bias $b=\lfloor\frac{n-1}2\rfloor$, {\tt CleverBreaker} has a strategy to prevent {\tt CleverMaker} from claiming a cycle.\end{theo}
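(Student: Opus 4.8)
The plan is to have {\tt CleverBreaker} keep $M_t$ a forest for the entire game; since any graph that contains a cycle as a minor contains a cycle, this prevents Maker from realising $H$ as a minor whenever $H$ is not a forest, which is exactly what Class 4 needs. To keep $M_t$ acyclic, Breaker maintains the invariant that, after each of its moves, every component of $M_t$ is \emph{internally saturated}: no free edge has both endpoints in the same Maker-component. This is equivalent to the goal, because with $M_t$ a forest the only way Maker closes a cycle on round $t+1$ is by claiming a free edge inside one of its components, which saturation forbids.

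The work lies in re-establishing saturation after Maker merges two components. Suppose on round $t+1$ Maker claims an edge $e_{t+1}$ joining components $A$ and $B$; Breaker must then claim every free $A$–$B$ edge, of which there are at most $|A|\,|B|-1$. First I would dispose of the early game: since $M_t$ is a forest, after $t$ Maker moves every component has at most $t+1$ vertices, so as long as $t=O(\sqrt n)$ the number of free $A$–$B$ edges is $O(n)$ and, once the constants are pinned down, at most $b=\lfloor(n-1)/2\rfloor$; here Breaker simply saturates $A\cup B$ within its round and is done. For the later game Breaker must have prepared, so I would have it maintain, alongside saturation, a quantitative sub-invariant bounding the number of free edges leaving each component as a function of that component's size and of $b$, designed so that (a) it holds trivially at the start; (b) whenever it holds, any merge involves at most $b$ free $A$–$B$ edges; and (c) after a merge Breaker can both saturate $A\cup B$ and restore the sub-invariant within a single round of $\lfloor(n-1)/2\rfloor$ moves. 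The guiding intuition is that a component created late is still small compared with the number of rounds already played, so Breaker has accumulated a large pool of claimed edges by then, and the sub-invariant is the bookkeeping that certifies this pool sits where it is needed; equivalently, (a)–(c) are packaged as the monotonicity of a single potential function.

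The main obstacle is the budget inequality in (c): showing that $\lfloor(n-1)/2\rfloor$ edges per round suffice to absorb simultaneously the newly-internal free edges and the repair of the sub-invariant. The binding case is the regime of a few large components, when Maker is close to a spanning tree and merely saturating its single component already consumes almost all of Breaker's edges — so the constant $\lfloor(n-1)/2\rfloor$ must be matched essentially exactly, which is why the choice of potential is the heart of the argument. It is worth recording why no static ``pairing'' strategy can replace this: such a strategy would amount to an edge-colouring of $K_n$ with no rainbow cycle and all colour classes of size at most $b+1$, whereas no-rainbow-cycle colourings of $K_n$ use at most $n-1$ colours and the extremal ones necessarily carry a colour class of size $n-1>b+1$, so an adaptive strategy is forced.
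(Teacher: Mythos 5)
The paper does not prove this theorem at all: it is quoted as a known result of Bednarska and Pikhurko (cited as \cite{BedPik}), so the comparison here is between your sketch and that cited proof. Your reduction step is fine and essentially forced: if Breaker can guarantee that, after each of his moves, no free edge has both endpoints in the same Maker-component, then $M_t$ stays a forest (and conversely, if such a free edge ever survives Breaker's move, Maker closes a cycle immediately). The problem is everything after that. Your items (a)--(c) --- ``maintain a quantitative sub-invariant, designed so that any merge leaves at most $b$ newly internal free edges and the sub-invariant can be restored within the same round'' --- are not an argument; they are a restatement of the theorem. You never define the potential, and you give no evidence that any such potential exists at the exact bias $\lfloor (n-1)/2\rfloor$. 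Note what the sub-invariant would have to guarantee: for \emph{every} pair of Maker-components $A,B$ joined by a free edge, the number of free $A$--$B$ edges is at most $b+1$, and this must be preserved under arbitrary merges chosen by Maker, even though merging $A$ and $B$ adds their free-edge counts towards every third component. In the regime you yourself single out (a few components of linear size, up to $\Theta(n^2)$ cross edges per pair, of which all but roughly $n/2$ must already sit in Breaker's graph for every pair Maker might pick), Breaker's per-round budget is tight to within an additive constant, and nothing in the proposal shows his earlier moves can be distributed so as to cover all pairs simultaneously. So there is a genuine gap: the ``heart of the argument,'' as you call it, is missing, and it is not clear the reactive saturate-after-each-merge framework can be pushed through at this constant at all.

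It is also worth knowing that the actual proof one would cite does not work this way. Bednarska and Pikhurko treat the cycle game as the circuit game on the graphic matroid of $K_n$, and Breaker's winning criterion at this bias reflects the fact that $b+1\geq\lceil n/2\rceil$ is the arboricity of $K_n$, i.e.\ $E(K_n)$ decomposes into $b+1$ forests; the strategy is built on that global decomposition/rank structure rather than on locally re-saturating Maker's components. Your closing remark about pairing strategies does not affect this, since a decomposition-based strategy is adaptive and is not a pairing strategy. If you want a self-contained proof, you should either import the matroid argument or actually construct and verify a potential of the kind you describe; as written, the proposal does not establish the theorem.
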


\begin{theo}\label{kritheo} For every $\epsilon>0$ there exists $c>0$ with the following property: for bias $b=(1-\epsilon)\frac n2$, {\tt RandomMaker} whp creates a $K_{c\sqrt{n}}$ minor when playing against {\tt CleverBreaker}.\end{theo}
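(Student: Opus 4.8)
This result is essentially the content of \cite{KriPre}, so the plan for the paper itself is simply to invoke that reference; let me nonetheless outline how one would prove such a statement. First, note that it is enough to show that, with high probability, Maker's graph $M$ contains $K_{c\sqrt n}$ as a minor for some fixed $c>0$: for $n$ large this subsumes every fixed minor, so in particular it settles the lower bound for Class~4. The guiding principle is that $M$ ought to behave like the random graph $G\bigl(n,\lceil\binom n2/(b+1)\rceil\bigr)$, which has $\bigl(\tfrac{1}{1-\epsilon}+o(1)\bigr)n$ edges, i.e.\ average degree bounded away from and above $2$; a supercritical Erd\H{o}s--R\'enyi graph of this density has a complete minor of order $\Theta(\sqrt n)$ with high probability, by the theorem of Fountoulakis, K\"uhn and Osthus on the order of the largest clique minor of $G(n,c/n)$ with $c>1$. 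Were Breaker to play at random, $M$ would be exactly such a graph; the entire difficulty lies in neutralising {\tt CleverBreaker}.

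The plan is to restrict attention to the first $m:=\alpha n$ moves of {\tt RandomMaker}, where $\alpha=\alpha(\epsilon)$ is chosen slightly below $\tfrac1{1-\epsilon}$ so that, on one hand, the game lasts at least $m$ rounds and, on the other hand, at every round $i\le m$ at least $\delta_1 n^2$ edges are still free, for some $\delta_1=\delta_1(\epsilon)>0$; the key consequence is that each $e_i$ is then uniform over a set of $\Omega(n^2)$ edges, so {\tt CleverBreaker} has little control over any single move. I would couple Maker's moves with an independent sequence $h_1,h_2,\dots$ of uniformly random edges of $K_n$, setting $e_i=h_i$ whenever $h_i\in F_{i-1}$, and let $R:=\{h_i:h_i\in F_{i-1}\}\subseteq M_m$. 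Using the crude bound $|F_{i-1}|\ge\binom n2-i(b+1)$ together with a one-sided Azuma inequality, one checks that $R$ has at least $(1+\delta_2)\tfrac n2$ edges with high probability for some $\delta_2=\delta_2(\epsilon)>0$, and that $R$ is obtained by repeatedly adding an edge drawn uniformly from a current free set of size $\ge\delta_1 n^2$. The remaining task is to show that such a random graph process produces, with high probability, a subgraph of linear order whose $2$-core expands well enough to contain a $K_{c'\sqrt n}$-minor --- either by adapting the Fountoulakis--K\"uhn--Osthus argument, or by verifying the expansion hypothesis of the Krivelevich--Sudakov theorem on minors in expanding graphs. Here one uses that, for every vertex set $S$ with $|S|\le n/2$, the number of edges of $R$ crossing the cut $(S,\overline S)$ dominates a binomial with many trials and success probability $\gtrsim|S|(n-|S|)/n^2$, while {\tt CleverBreaker}'s total edge budget cannot be concentrated on all such cuts at once.

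The hard part is this last expansion step. Because {\tt CleverBreaker} claims a constant fraction of the edges --- in fact nearly all of them --- one cannot reduce $R$ to a binomial random graph by stochastic domination: {\tt CleverBreaker} can destroy any single prescribed edge, so $M$ does not stochastically contain $G(n,c/n)$. One must instead argue directly that Breaker's budget, although of size comparable to $\binom n2$, is too diffuse to damage the expansion of $M$ uniformly over the exponentially many choices of $S$. This is precisely the technical core of \cite{KriPre}, which is why, in the paper, we rely on that reference rather than reproving it.
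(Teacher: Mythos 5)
Your proposal matches the paper exactly on the essential point: Theorem \ref{kritheo} is not proved in the paper but is quoted directly from Krivelevich \cite{KriPre}, which is precisely the route you take. The additional sketch you give (coupling {\tt RandomMaker}'s moves with uniform random edges and then extracting a $K_{c\sqrt n}$ minor via expansion, in the spirit of Fountoulakis--K\"uhn--Osthus and Krivelevich--Sudakov) is a reasonable account of what lies inside that reference, but it is not needed here and the paper does not reproduce it.
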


\subsection{Class 3}\label{subsc3}

Class 3 seems to be the most complicated case to analyze. We will first prove that $\frac n2$ is a sharp threshold in the game between {\tt CleverMaker} and {\tt CleverBreaker}, and then prove in the next section that the situation changes if {\tt RandomMaker} plays instead.

For the upper bound of the Clever-Clever case, note that every graph $H$ in Class 3 must contain either $K_{1,3}$ or $P_4$ as a subgraph. Note also that, if $G$ is a graph with $H$ as a minor, then $K_{1,3}\subseteq H\Rightarrow K_{1,3}\subseteq G$ and $P_4\subseteq H\Rightarrow P_4\subseteq G$. Therefore, {\tt CleverBreaker} only needs to be able to prevent {\tt CleverMaker} from creating a copy of $K_{1,3}$ or $P_4$ to win.

Suppose first that $K_{1,3}\subseteq H$. Then, for $b=n$, {\tt CleverBreaker} can follow the strategy from Lemma \ref{deglem} to win.

Suppose now that $P_4\subseteq H$. We will define a strategy for {\tt CleverBreaker} that forces every component in {\tt CleverMaker}'s graph to be a star. In this strategy, the following properties are true before every move by {\tt CleverMaker}:
\begin{inva}\label{starinv} Every componenet in $M_t$ has a distinguished vertex, called its head. The component is a star centered on its head. Every edge which is free at time $t$ connects two heads, at least one of which is an isolated vertex in $M_t$.\end{inva}

At the start of the game, every vertex is isolated, and it is the head of its component. Every time that {\tt CleverMaker} claims an edge $uv$, both endpoints are the heads of their corresponding components, and at least one of them, say $u$, is an isolated vertex. {\tt CleverBreaker} makes $v$ the head of the new component, claims every free edge incident to $u$ and claims every edge from $v$ to a non-isolated vertex.

We can check that Invariant \ref{starinv} holds at the start of the game and, by following the strategy, if the invariant holds at time $t$ then it also holds at time $t+1$. We will prove that the strategy can be followed with $b=n$. Observe that in each round the number of heads decreases by one, so after {\tt RandomMaker}'s $t$-th move there are $n-t$ heads, and hence at most $n-t$ free edges incident to $u$. On the other hand, since Maker has claimed $t$ edges, there are at most $t$ non-trivial components, hence at most $t$ non-isolated heads whose free edges to $v$ must be claimed by {\tt CleverBreaker}. This gives a total of $n$ edges at most that {\tt CleverBreaker} must claim to follow this strategy.

Next we show that {\tt CleverMaker} has a strategy to claim a graph with $H$ as a minor, for bias $b=(1-\epsilon)n$. This strategy is based on the box game, introduced by Chv\'atal and Erd\H{o}s  \cite{ChvErd}. This is a particular case of Maker-Breaker game, although we will study it from a different perspective.

Suppose that we have $k$ boxes, and let $a_i$ be the initial number of coins in the $i$-th box. Let $m$ be a positive integer, which will be the bias of the game. Two players, {\tt BoxMaker} and {\tt BoxBreaker}, play in this game. On every round, {\tt BoxMaker} removes a total of $m$ coins from one or more of the boxes. Then, {\tt BoxBreaker} removes a non-empty box. The game ends when one of the boxes becomes empty (in which case {\tt BoxMaker} wins) or when there are no boxes left (in which case {\tt BoxBreaker} wins).

Let $h_i=\sum\limits_{j=1}^ij^{-1}$ be the $i$-th harmonic number. One can give the following criterion that guarantees that {\tt BoxBreaker} has a winning strategy (see Corollary 5.4 in \cite{HamVer} for a tight version): for any non-empty set $S\subseteq [k]$, we have \begin{equation}\label{boxinv}\sum\limits_{j\in S}a_j> |S|h_{|S|}m.\end{equation} {\tt BoxBreaker}'s strategy is then to always remove the box with the fewest coins in it. The reason why this criterion is valid is that the number of coins in each box after any number of rounds still satisfies equation \eqref{boxinv}.

Now consider a variant of the previous game. Instead of removing a box, {\tt BoxBreaker} places the $m$ coins that {\tt BoxMaker} removed back into one of the boxes (all coins must go into a single box). {\tt BoxBreaker} cannot win (there is no winning condition in this variant), but under the same criterion as before, {\tt BoxBreaker} can indefinitely prevent {\tt BoxMaker} from winning.

With this in mind, we can give a sketch of the proof of the lower bound. Remember that {\tt CleverMaker}'s goal is to create a graph with $H$ as a minor, by creating a family of branch sets which are connected in the right way. During the game, {\tt CleverMaker} creates one by one the branch sets. The free edges incident to the branch sets play the role of the coins. {\tt CleverMaker} plays as {\tt BoxBreaker} in a parallel game. {\tt CleverBreaker}, as {\tt BoxMaker}, claims on every round some of the free edges (coins), removing them. {\tt CleverMaker} finds the branch set $C_i$ with the fewest free edges incident to it and claims appropriately one of them. The new edge connects $C_i$ to a vertex that was previously not included in any branch set, so the vertex can be added to $C_i$. This increases the number of free edges incident to $C_i$ (as expected in {\tt BoxBreaker}'s move).

While playing according to this strategy increases the size of the branch sets created, it does not increase the number of them. Therefore, it will be necessary to use some rounds to create a new branch set. This is possible because the number of free edges added in {\tt CleverMaker}'s turn is slightly larger than the number of edges claimed in {\tt CleverBreaker}'s turn.

\begin{theo}\label{boxtheo} Let $H$ be a forest. For bias $b=(1-\epsilon)n$, and $n>N(H,\epsilon)$, {\tt CleverMaker} has a strategy to win the $H$-minor game. \end{theo}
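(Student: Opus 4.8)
The plan is for {\tt CleverMaker} to construct, move by move, a minor model of $H$ inside $M$: pairwise disjoint vertex sets $C_v$ for $v\in V(H)$, each inducing a connected subgraph of $M$, together with, for every edge $uv\in E(H)$, a Maker edge having one endpoint in $C_u$ and one in $C_v$. Since $H$ is a forest, fix a root in each of its components and list $V(H)=\{v_1,\dots,v_h\}$ (with $h=v(H)$) so that every vertex follows its parent; write $v_{p(i)}$ for the parent of a non-root $v_i$, and note that the edges $v_iv_{p(i)}$ are exactly the edges of $H$. {\tt CleverMaker} will build the branch sets in this order and will keep each of them at a constant size $L=L(H)$ once it is built; the point of the construction is to realise the connecting edge $v_iv_{p(i)}$ at the very moment $C_{v_i}$ is created, before {\tt CleverBreaker} has had any chance to block it.

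Run in parallel the coin-returning box game. Call a vertex \emph{fresh} if it lies in no branch set, and let the \emph{coins of box $i$} be the free edges joining $C_{v_i}$ to a fresh vertex. {\tt CleverBreaker} plays {\tt BoxMaker} and {\tt CleverMaker} plays {\tt BoxBreaker} greedily: each round, after {\tt CleverBreaker} has claimed its $b$ edges (removing at most $b$ coins in total), {\tt CleverMaker} takes the box $i$ with the fewest coins and \emph{enlarges} $C_{v_i}$ by claiming one of its coins $yx$ (with $y\in C_{v_i}$, $x$ fresh) and adding $x$ to $C_{v_i}$ --- unless it is time to create the next branch set, in which case {\tt CleverMaker} instead takes a coin $yw$ of box $p(i)$, claims it, and sets $C_{v_i}=\{w\}$, so that $yw$ becomes, and forever remains, the connecting edge for $v_iv_{p(i)}\in E(H)$. (If $v_i$ is a root, $C_{v_i}$ is created for free by designating a fresh vertex, chosen to have only $o(n)$ incident Breaker edges --- possible since {\tt CleverBreaker} has claimed only $O_H(n)$ edges by then; likewise we choose $w$ above to have few incident Breaker edges.) Because $b=(1-\epsilon)n$ and the total size $s_t=\sum_j|C_{v_j}|$ of all branch sets never exceeds $hL=O_H(1)$, an enlargement of $C_{v_i}$ creates about $n-s_t\ge(1-\epsilon)n=b$ new coins of box $i$ while destroying only $O_H(1)$ of them, so each round {\tt CleverMaker} returns to its chosen box at least $m:=b$ coins up to an $O_H(1)$ error --- exactly as {\tt BoxBreaker} does in the variant, with a surplus of $\approx\epsilon n$ coins to spare.

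The heart of the matter is to keep criterion \eqref{boxinv} (with $m=b$) valid for the current family of boxes, since this is what guarantees that no box ever empties and hence that the coins required above always exist. Initially there is one box with $(1-o(1))n>h_1m$ coins, and the box-game computation shows that \eqref{boxinv} survives each enlargement and each move of {\tt CleverBreaker}; what has to be checked by hand is the step in which a new box is introduced with only $\approx n$ coins, whereas \eqref{boxinv} for a $k$-element set demands $kh_km$ coins. This is exactly why {\tt CleverMaker} grows every branch set to size $L$ before creating the next one: once $|C_{v_j}|=L$ the box has $\gtrsim Ln$ coins, and an elementary inequality involving the harmonic numbers $h_1,\dots,h_h$ shows that, if $L$ is a large enough constant depending only on $h$, a fresh box with $\approx n$ coins can be added to $h-1$ such boxes without violating \eqref{boxinv}, with room left to absorb the $O_H(1)$ coin losses caused by enlargements and creation moves. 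Since $s_t$ stays bounded, all the $1-o(1)$ estimates hold once $n>N(H,\epsilon)$, and the construction finishes after $O_H(1)$ rounds --- far fewer than the $\Theta(n)$ rounds of the game --- at which point $M$ contains the connected, pairwise disjoint branch sets $C_{v_1},\dots,C_{v_h}$ together with all $e(H)$ connecting edges, that is, $H$ as a minor; so {\tt CleverMaker} wins against any {\tt CleverBreaker}. The main obstacle is precisely this last bookkeeping: pinning down $L$ and verifying the harmonic-number inequality that lets a coin-poor new branch set be inserted without breaking \eqref{boxinv}, together with the scheduling of when to enlarge and when to create a branch set; everything else follows routinely from the box game and the surplus coming from $b<n$.
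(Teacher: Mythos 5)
Your overall plan is the paper's: run the coin-returning box game with branch sets as boxes and free edges to fresh vertices as coins, refill the poorest box greedily, and realise each connecting edge of the forest at the moment the child's branch set is created from a coin of its parent's box. However, the step you yourself flag as the ``main obstacle'' --- inserting a coin-poor new box without violating \eqref{boxinv} --- is exactly where your justification breaks, and the premise you lean on is false. You claim that once $|C_{v_j}|=L$ the box has $\gtrsim Ln$ coins. It does not: {\tt CleverBreaker} removes up to $(1-\epsilon)n$ coins per round and may concentrate them on a single box, while each enlargement returns only about $n$ coins, so a box grown for $L$ rounds is only guaranteed roughly $n+L\epsilon n$ coins, which for small $\epsilon$ is nowhere near $Ln$. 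Consequently there is no ``$L$ depending only on $h$'': the only resource you actually have is the net surplus of about $\epsilon n$ coins per round, so the number of rounds you must wait between creations is $\Theta_k(1/\epsilon)$, and the harmonic-number bookkeeping has to be done against this surplus, not against a supposed stock of $Ln$ coins. This is precisely what the paper's invariant \eqref{gboxinv} encodes via the term $\frac{|S|}{k}(t-T(t))\frac{\epsilon}{4}n$, which accumulates the per-round surplus since the last creation and, once large enough, yields the creation condition \eqref{gboxcond}; your proposal replaces this accounting with an incorrect per-box size argument, so the central claim that \eqref{boxinv} survives the insertion of a new box is not established.

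A second, smaller gap: your estimate that each enlargement ``creates about $n-s_t$ new coins'' requires the newly absorbed fresh vertex to have almost full free degree, and Breaker can ruin a bounded number of vertices completely within $O(1)$ rounds. You impose the high-free-degree choice only for roots and for the vertex $w$ at creation steps, not for ordinary enlargements; the strategy must make this choice there too (the paper always picks the free edge maximizing $d_{F_{t-1}}(v)$, and verifies such a good vertex exists because only $O_{H,\epsilon}(1)$ vertices can have been spoiled while the box invariant guarantees $\Omega(n)$ candidates). Both gaps are repairable along the paper's lines, but as written the proof of the key maintenance of \eqref{boxinv} at creation steps is not correct.
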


Now we describe the strategy in detail. Let $k=|V(H)|$, and let $v_1, \dots, v_k$ be a 1-degenerate ordering of $V(H)$ (every vertex is adjacent to at most one of its predecessors). Let $r(t)$ denote the number of branch sets created by the $t$-th round. Let $C_i(t)$ denote the branch set corresponding to $v_i$ at time $t$, and $f_i(t)$ the number of edges which are free before {\tt CleverBreaker}'s move on round $t$ and which are incident to $C_i(t)$.

\begin{itemize}
\item On the first round of the game, claim an arbitrary edge $uv$. Set $r(1)=1$, $C_1(1)=\{u,v\}$.
\item Let $f'_i(t-1)$ denote the number of free edges incident to $C_i(t-1)$ at time $t-1$. If for every non-empty subset $S\subseteq [r(t-1)]$ we have \begin{equation}\label{gboxcond} \sum\limits_{i\in S}f'_i(t-1)>h_{|S|+1}(|S|+1)\left(1-\frac\epsilon2\right)n\end{equation} then set $r(t)=r=r(t-1)+1$. Let $1\leq i<r$ be the only value such that $v_iv_r\in E(H)$ (if no such value of $i$ exists, select $i$ arbitrarily). Choose a free edge $uv$ with $u\in C_i(t-1)$, $v\not\in\cup_{j=1}^{r-1}C_j(t-1)$, and which maximizes $d_{F_{t-1}}(v)$. Set $C_r=\{v\}$.
\item If \eqref{gboxcond} does not hold for some $S$, let $i$ be the value that minimizes $f'_i(t-1)$. Choose a free edge $uv$ with $u\in C_i(t-1)$, $v\not\in\cup_{j=1}^{r(t-1)}C_j(t-1)$ and which maximizes $d_{F_{t-1}}(v)$. Set $C_i(t)=C_i(t-1)\cup\{v\}$.
\end{itemize}

We will see that, not only does this strategy lead to {\tt CleverMaker}'s victory, but that the duration of the game is bounded by a constant $K(H,\epsilon)$. After fixing such $K$, there exists $N(H,\epsilon, K)$ such that every inequality in this proof holds for $n>N$.

Let $T(t)$ be the minimum value of $s$ such that $r(s)=r(t)$. Let us study how the numbers $f_i(t)$ change as long as $r(t)$ remains constant. We claim that the following condition holds: for every $t\leq K$, for every non-empty $S\subseteq [r(t)]$, we have \begin{equation}\label{gboxinv}\sum\limits_{j\in S}f_i(t)\geq h_{|S|}|S|\left(1-\frac\epsilon2\right)n+\frac{|S|}{k}\left(t-T(t)\right)\frac\epsilon 4n.\end{equation}

Notice that {\tt CleverBreaker} claims $(1-\epsilon)n$ edges at each round, out of which at most ${2K \choose 2}\leq \frac{\epsilon n}{1000}$ have both endpoints in $\cup_{j=1}^{r(t-1)}C_j(t-1)$. Therefore we have \begin{equation}\label{maxdec}\sum\limits_{j=1}^{r(t-1)}\left(f_j(t-1)-f'_j(t-1)\right)\leq \left(1-\frac{999\epsilon}{1000}\right)n.\end{equation}

{\tt CleverMaker} and {\tt CleverBreaker} together claim fewer than $K(1-\frac{\epsilon}{2})n$ edges throughout the first $K$ rounds, meaning that among any $\frac{\epsilon}{1000K}n$ vertices there is one with at least $(1-\frac{\epsilon}{1000})n$ free edges incident to it. Assuming that \eqref{gboxinv} holds for $S=\{i\}$, and taking \eqref{maxdec} into account, we have that $f'_i(t)\geq \frac{\epsilon}{4}n$. There are at most ${2K \choose 2}<\frac{\epsilon}{20}n$ free edges with both endpoints in $\cup_{j=1}^{r(t-1)}C_j(t-1)$, so there are at least $\frac{\epsilon}5n$ free edges from $C_i(t-1)$ to outside $\cup_{j=1}^{r(t-1)}C_j(t-1)$, and the number of vertices outside $\cup_{j=1}^{r(t-1)}C_j(t-1)$ which have a neighbour in $C_i(t-1)$ is greater than $\frac{\epsilon}{1000K}n$, meaning that one of them is incident to at least $\left(1-\frac{\epsilon}{1000}\right)n$ free edges. By the choice of the edge selected by {\tt CleverMaker}, we have \begin{equation}\label{mininc} f_i(t)-f'_i(t-1)\geq \left(1-\frac\epsilon{500}\right)n \hskip 1cm\text{if }r(t-1)=r(t)\end{equation}\begin{equation}\label{minnew} f_{r(t)}(t)\geq \left(1-\frac\epsilon{500}\right)n \hskip 2.4cm \text{if }r(t-1)\neq r(t)  \end{equation}

We consider several cases:

Suppose first that \eqref{gboxcond} does not hold for some set $S$. Therefore {\tt CleverMaker} adds a vertex to a branch set $C_i$, and $r(t)=r(t-1)=r$. We will show that, if \eqref{gboxinv} holds for every non-empty $S\subseteq [r]$ replacing $t$ with $t-1$, then \eqref{gboxinv} itself also holds. Fix a set $S\subseteq [r]$. We distinguish two cases:

\begin{itemize}
\item \textbf{Case 1.1:} $i\in S$. In this case, combining \eqref{maxdec} and \eqref{mininc}, we get \begin{align*}\sum\limits_{j\in S}f_j(t)&\geq \sum\limits_{j\in S}f_j(t-1)+\frac{\epsilon}{4}n\\ &\geq h_{|S|}|S|\left(1-\frac\epsilon 2\right)n+\frac{|S|}{k}(t-T(t))\frac\epsilon 4.\end{align*}

\item \textbf{Case 1.2:} $i\notin S$. In this case, we consider \eqref{gboxinv} applied to the set $S\cup\{i\}$. Remember that, by the choice of $i$, we have $f'_i(t-1)\leq f'_j(t-1)$ for any $j\in S$. This leads to: \begin{align*}\sum\limits_{j\in S}f_j(t)&=\sum\limits_{j\in S}f'_j(t-1)\\ 
&\geq\frac{|S|}{|S|+1}\left(\sum\limits_{j\in S\cup\{i\}}f'_j(t-1)\right)\\
&\geq \frac{|S|}{|S|+1}\left(\sum\limits_{j\in S\cup\{i\}}f_j(t-1)-\left(1-\frac{999\epsilon}{1000}\right)n\right)\\
&\geq h_{|S|+1}|S|\left(1-\frac\epsilon 2\right)n+\frac{|S|}{k}(t-1-T(t))\frac\epsilon 4n-\frac{|S|}{|S|+1}\left(1-\frac{999\epsilon}{1000}\right)n\\
&=h_{|S|}|S|\left(1-\frac\epsilon 2\right)n+\frac{|S|}{k}(t-1-T(t))\frac\epsilon 4n+\frac{|S|}{|S|+1}\frac{499\epsilon}{1000}n\\
&\geq h_{|S|}|S|\left(1-\frac\epsilon 2\right)n+\frac{|S|}{k}(t-T(t))\frac\epsilon 4n.
\end{align*}
\end{itemize}

Now we consider the case in which $r(t-1)\neq r(t)=r$. This means that \eqref{gboxcond} holds for every non-empty $S\subseteq [r-1]$. Also, we have $T(t)=t$. We will show that $\eqref{gboxinv}$ also holds. We distinguish two cases:

\begin{itemize}
\item \textbf{Case 2.1:} $r\notin S$. This case follows directly from \eqref{gboxcond}:
\begin{align*}
\sum\limits_{j\in S}f_j(t)&=\sum\limits_{j\in S}f'_j(t-1)\\
&\geq h_{|S|+1}(|S|+1)\left(1-\frac\epsilon 2\right)n\\
&\geq h_{|S|}|S|\left(1-\frac\epsilon 2\right)n
\end{align*}

\item\textbf{Case 2.2:} $r\in S, S\neq \{r\}$. This case also follows from \eqref{gboxcond}:
\[\sum\limits_{j\in S}f_j(t)\geq\sum\limits_{j\in S\setminus \{r\}}f'_j(t-1)\geq h_{|S|}|S|\left(1-\frac\epsilon 2\right)n.\]

\item\textbf{Case 2.3:} $S=\{r\}$. This case follows from \eqref{minnew}:

\[f_r(t)\geq \left(1-\frac{\epsilon}{500}\right)n
\geq h_{1}\left(1-\frac\epsilon 2\right)n.\]
\end{itemize}

This concludes the proof of the invariant \eqref{gboxinv} for the first $K$ rounds. To complete the proof of Theorem \ref{boxtheo}, we need to show that for an appropriate value of $K=K(H,\epsilon)$ the value of $r(t)$ reaches $k$ before $t=K$. Indeed, by \eqref{maxdec}, if we have \begin{equation*}\sum\limits_{j\in S}f_j(t-1)\geq h_{|S|+1}(|S|+1)\left(1-\frac\epsilon 2\right)n+n\end{equation*} then \eqref{gboxcond} will also hold. But there is a constant $K'$, depending only on $\epsilon$ and $k$, such that \begin{equation*}h_{s}s\left(1-\frac\epsilon 2\right)n+\frac skK'\frac\epsilon 2n\geq h_{s+1}(s+1)\left(1-\frac\epsilon 2\right)n+n\end{equation*} holds for every $1\leq s\leq k$. Thus it takes at most $K'$ rounds to increase the value of $r(t)$, and $K=kK'$ is enough to satisfy the desired conditions. This concludes the proof of Theorem \ref{boxtheo}.

To conclude the proof of Theorem \ref{main}, simply observe that by Theorem \ref{kritheo}, {\tt RandomMaker} wins against {\tt CleverBreaker} for bias $b=(1-\epsilon)\frac n2$, which combined with the upper bound of the Clever-Clever case gives $b^*_{RC}\sim n$.

\section{Proof of Theorem \ref{pathrmcb}}\label{section3}

We will next prove Theorem \ref{pathrmcb}, implying that {\tt RandomMaker} cannot match the threshold of the game between {\tt CleverMaker} and {\tt CleverBreaker} for every graph $H$ in Class 3. For this, we will give a strategy for {\tt CleverBreaker} for bias $b=0.99n$.

The game lasts for $\frac{{n \choose 2}}{0.99n+1}=\frac{n}{1.98}+o(n)$ rounds. {\tt CleverBreaker}'s strategy will divide the game into two phases, one consisting of the first $0.03n$ rounds and the other consisting of the rest. In the first phase, {\tt CleverBreaker}'s goal is to claim linearly many edges incident to every vertex, while preventing {\tt RandomMaker} from creating a large component.  Let $I$ denote the set of isolated vertices in $M_{0.03n}$. If the first phase is successful, {\tt CleverBreaker}'s goal in the second phase is to make sure that {\tt RandomMaker}'s graph on $I$ consists of stars (following a strategy similar to the one from section \ref{subsc3}) and that every star in $I$ has at most one edge to $J=V\setminus I$.

Let us first analyze the first phase in more detail. {\tt CleverBreaker}'s strategy goes as follows. Let $I_t$ denote the set of isolated vertices in $M_t$, and let $J_t=V\setminus I_t$. {\tt CleverBreaker} claims $0.99n$ edges according to the following rules of priority:

\begin{itemize}
\item Edges within $J_t$, in any order.
\item Edges between $I_t$ and $J_t$, following this procedure: the endpoint $u$ in $J_t$ is chosen among the vertices of $J_t$ that are incident to a free edge, maximizing the size of its component in $M_t$ (ties are broken uniformly at random). The endpoint $v$ in $I_t$ is chosen uniformly at random from the free edges incident to $u$.
\item Edges within $I_t$, uniformly at random.
\end{itemize}

We claim that after $0.03n$ rounds the following two properties hold whp:
\begin{itemize}
\item For any $v_1,v_2\in I$, we have $|d_{F_{0.03n}}(v_1)-d_{F_{0.03n}}(v_2)|\leq 0.001n$.
\item Every component in $M_{0.03n}$ contains at most four vertices.
\end{itemize}

To prove the first claim, fix two vertices $v_1,v_2\in I$. We will bound the probability of $|d_{F_t}(v_1)-d_{F_t}(v_2)|\geq 0.001n$, for every $0\leq t\leq 0.03n$. 

Let $W_t(v_1,v_2)=\{w_1, w_2, \dots, w_k\}$ be the set of vertices in $V\setminus \{v_1,v_2\}$ which are joined by a free edge to exactly one of $\{v_1, v_2\}$ at time $t$. Since $v_1,v_2\in I$, the other edge was claimed by {\tt CleverBreaker}. We consider for each $i$ the indicating random variable of the event $X_i=\{w_iv_1\text{ is free}\}$. We claim that the random variables $X_i$ are mutually independent and each has probability $\frac12$. Indeed, consider a possible development $\mathcal{D}=\{M_j=\Gamma_j, B_j=\Psi_j\}_{j=0}^t$ of the game up to round $t$, in which the edge $w_iv_1$ was claimed at round $t_i$. Consider an alternate development $\mathcal{D}'=\{M_j=\Gamma_j, B_j=\Psi'_j\}_{j=0}^t$, in which $\Psi'_j=\Psi_j+w_iv_2-w_iv_1$ for $t_i\leq j\leq t$. If $\mathcal{D}$ satisfies the rules for each player then so does $\mathcal{D'}$, and the probability of reaching $\mathcal{D}$ and $\mathcal{D}'$ is the same (the relevant choices by either player are taken uniformly at random). Also note that the set $W_t(v_1,v_2)$ is the same in each development, with the only difference being the vertex to which $v_i$ is connected. The mutual independence of the $\{X_i\}_{i=1}^k$ follows.

We have $|d_{F_t}(v_1)-d_{F_t}(v_2)|=\left|2\sum_{i=1}^kX_i-k\right|$. Conditioned on $k$, the distribution of $\sum_{i=1}^kX_i$ is a binomial distribution $\mathrm{Bin}\left(k,\frac12\right)$. By Chernoff's inequality, $\Pr(|d_{F_t}(v_1)-d_{F_t}(v_2)|\geq 0.001n\mid k)\leq 2\exp\left(-0.0005^2n\right)$, which is exponentially small. We conclude that, with high probability, $|d_{F_t}(v_1)-d_{F_t}(v_2)|\leq 0.001n$ for all $0\leq t\leq 0.03n$ and every $v_1,v_2\in I$.

Now we will prove the second claim. Observe first that on every round at most two vertices are added to $J_t$, and so $|J_t|\leq 2t$. In addition, at most $2|J_t|\leq 0.12n$ free edges are added to $J_t$, and since those edges have the highest priority, {\tt CleverBreaker} claims all of them in one round. Therefore, after {\tt CleverBreaker}'s move there are no free edges inside $J_t$. Every edge claimed by {\tt RandomMaker} has at least one endpoint in $I_t$, meaning that in every round at least one vertex is added to $|J_t|$, and $|J_t|\geq t$.

Let $m_k(t)$ denote the number of free edges incident to components with at least $k$ vertices in $M_t$. As every edge claimed by {\tt RandomMaker} has one of its endpoints in $I_t$, we get that $m_k$ can only increase if {\tt RandomMaker} claims an edge incident to a previously created component of size exactly $k-1$ (in which case $m_k(t+1)\leq m_k(t)+kn-b$) or at least $k$ (in which case $m_k(t+1)\leq m_k(t)+n-b$). Otherwise, since the free edges incident to large components have priority in {\tt CleverBreaker}'s strategy, we have $m_k(t+1)\leq \max\{0,m_k(t)-b+0.12n\}$.

Following these inequalities, $m_2(t+1)\leq m_2(t)+1.01n$, and thus $m_2(t)\leq 0.0303n^2$. In the first $0.03n$ rounds the amount of edges claimed by the two players is at most $0.03n(b+1)\leq {n\choose 2}-0.47n^2$, therefore on each round the probability that {\tt RandomMaker} creates a component of size at least three is at most $\frac{0.0303n^2}{0.47n^2}<\frac1{15}$.

Next we claim that $m_3(t)\leq n^{4/3}$ for every $0\leq t\leq 0.03n$. Assume that $m_3(T)\geq n^{4/3}$ for some $T$, and let $T'\leq T$ be the largest value for which $m_3(T')=0$. In the rounds $T'\leq t\leq T$ we have $m_3(t)>0$, and so the value of $m_3(t)$ increases by at most $2.01n<3n$ when {\tt RandomMaker} creates a component of size at least three, and it decreases by at least $0.87n>\frac n2$ otherwise. From this we deduce that $T-T'\geq\frac{n^{1/3}}{3}$ and that, between the rounds $T'$ and $T$ there were at least $\frac{T-T'}{7}$ increases (as one increase can compensate at most six decreases). But an increase occurs with probability at most $\frac 1{15}$ on each round, regardless of the result of previous rounds, so for any fixed values of $T,T'$ with $T-T'\geq \frac{n^{1/3}}{3}$ the probability of $\frac{T-T'}{7}$ increases is, by Chernoff's inequality, at most $\exp\left(-\frac{T-T'}{3}\right)\leq \exp\left(-\frac{n^{1/3}}{9}\right)$. With high probability, this is not the case for any $0\leq T'<T\leq 0.03n$, and $m_3(t)\leq n^{4/3}$.

On each round, the probability that {\tt RandomMaker} creates a component of size at least four is at most $\frac{n^{4/3}}{0.47n^2}<3n^{-2/3}$. For every $T$, the probability that two components of size at least four are created between the rounds $T$ and $T+n^{1/10}$ is at most $9n^{-4/3}{n^{1/10} \choose 2}=o(n^{-1})$. With high probability, any two components of size at least four are created at least $n^{1/10}$ rounds apart.

Any component of size exactly four is incident to at most $4n$ free edges. We know that whp the component will not grow in the $n^{1/10}$ rounds after being created, and {\tt CleverBreaker} claims all the edges incident to it in the first $\left\lceil\frac{4n}{0.87n}\right\rceil=5$ rounds after its creation. We conclude that the components of size four do not grow at all, and that no component of size at least five is created.

Now we describe {\tt CleverBreaker}'s strategy during the second phase of the game ($t>0.03n$). Remember that there are no free edges in $J$, and that $|I|\leq 0.97n$. At any point in the game, we talk about the $I$-component or the $J$-component of a vertex as the component of that vertex in the graph induced by {\tt RandomMaker}'s edges on $I$ or $J$. Every $I$-component will have a head, as in the strategy to prevent paths with four vertices from section \ref{subsc3}. Initially every vertex in $I$ is its own head.

\begin{itemize}\item Whenever {\tt RandomMaker} claims an edge $uv$ within $I$, wlog $u$ is an isolated vertex. Then {\tt CleverBreaker} claims every free edge incident to $u$ within $I$, and every edge from $v$ to the heads of other non-trivial $I$-components. This requires at most $0.97n$ edges. Then claim free edges incident to the newly created $I$-component until $0.99n$ edges have been claimed, or until no free edges incident to the $I$-component are left.
\item Whenever {\tt RandomMaker} claims an edge $uv$ between $I$ and $J$, {\tt Clever\-Breaker} claims every free edge incident to the corresponding $I$-component.\end{itemize}

We claim that the following invariants hold:

\begin{inva}\label{invapha2}
\
\begin{itemize}\item Every $I$-component is a star, with the head as its center.\item Every free edge within $I$ connects two heads, at least one of which is an isolated vertex.\item Every $I$-component is incident to at most $0.98n$ free edges.\item Every $I$-component has at most one {\tt RandomMaker} edge connecting it to $J$, and if there is such an edge then the $I$-component is not incident to any free edges.\end{itemize}\end{inva}

The proof of the first two items is the same as in section \ref{subsc3}. We will prove the latter two. We start by showing that, at the start of the second phase, every vertex in $I$ is incident to at most $0.98n$ free edges. The number of edges claimed by {\tt CleverBreaker} during the first phase is at most $0.03n\cdot 0.99n=0.0297n^2$. Out of these edges, at most ${|J| \choose 2}$ edges lie inside $J$ and the rest have at least one endpoint in $I$, meaning that the sum of the degrees of the vertices in $I$ in {\tt CleverBreaker}'s graph is at least \[0.0297n^2-{|J| \choose 2}\geq 0.0297n^2-{0.06n \choose 2}>0.0279n^2\] The average degree of the vertices of $I$ in {\tt CleverBreaker}'s graph is at least $0.0279n$, so the average number of free edges incident to it is at most $0.9721n$. But the number of free edges incident to two vertices in $I$ can differ by at most $0.001n$, hence every vertex in $I$ is incident to at most $0.98n$ free edges.

Now suppose that Invariant \ref{invapha2} holds at time $t$. Let $e_{t+1}=uv$. If $u\in I$ and $v\in J$, then {\tt CleverBreaker} can claim every free edge incident to the $I$-component of $u$, as there are at most $0.98n$ of them, and the invariant holds at round $t+1$ (the $I$-components have not changed). If $u,v\in I$, the newly created $I$-component is incident to at most $1.96n$ free edges, out of which at most $0.97n$ will remain after {\tt CleverBreaker}'s move. The newly created $I$-component is also not adjacent to $J$, since both $u$ and $v$ were incident to a free edge in the previous round. Invariant \ref{invapha2} holds at time $t+1$.

The $J$-components all lie in different components in $M$, so a path in $M$ intersects at most one $J$-component. In addition, every $I$-component can only appear at the beginning or the end of a path, because at most one edge leaves each $I$-component. Moreover, a $J$-component can only contribute four vertices to a path (that is the maximum possible size of a $J$-component) and an $I$-component can only contribute three vertices (because it is a star). Thus the maximum possible number of vertices in a path in $M$ is ten, one short of the eleven needed for a path on ten edges. This concludes the proof of Theorem \ref{pathrmcb}.

As a corollary, observe that if $H$ is a path or a three-legged spider (three disjoint paths meeting at a common endpoint) then any graph that contains $H$ as a minor also contains $H$ as a subgraph. Therefore the constant factor separation that we observe in the $H$-minor game also translates to the $H$-subgraph game. This means that the result from \cite{BedLuc} showing $b^*_{CC}\sim b^*_{RC}$ in the $H$-subgraph game cannot be strengthened to $b^*_{CC}\approx b^*_{RC}$, as we stated in Corollary \ref{bedlucnot}.

\begin{figure}
\begin{centering}
\includegraphics[width=85mm]{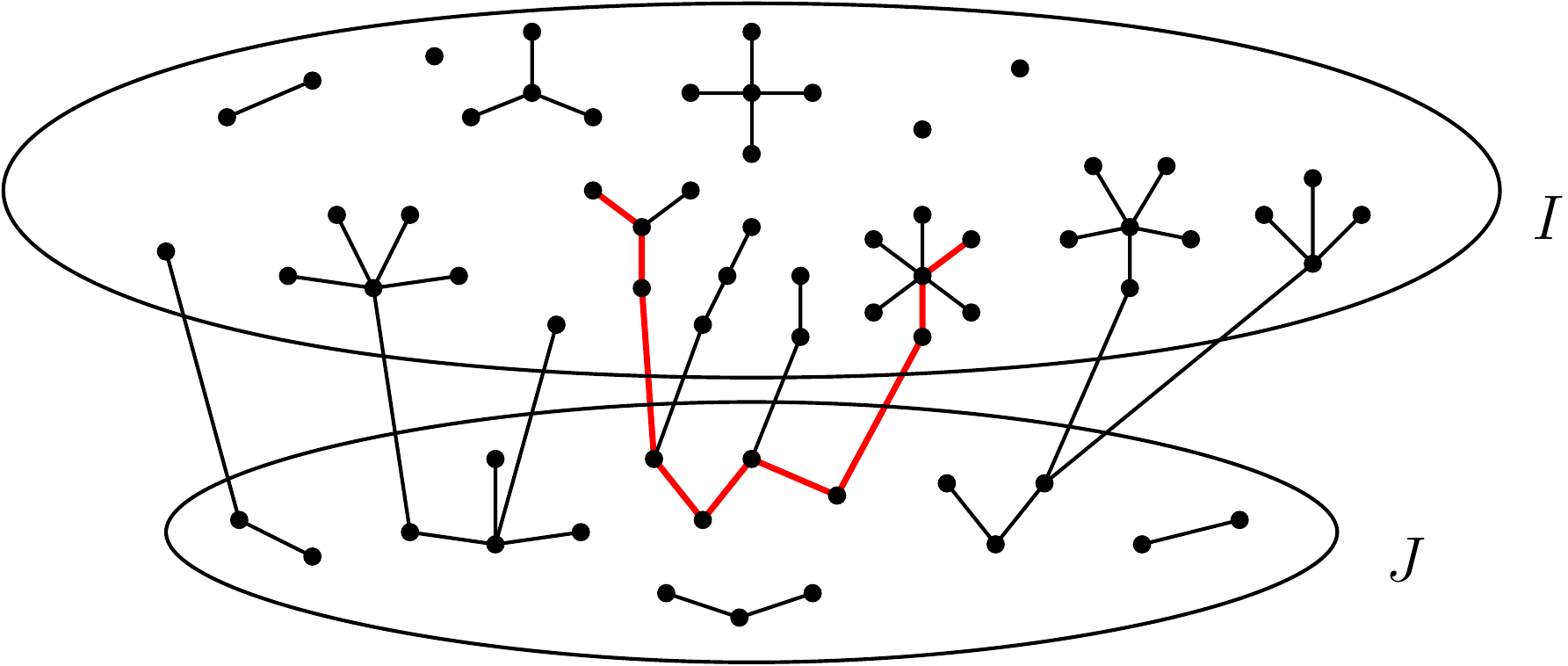}
\caption{A path of maximum length in a graph with the structure of $M$.}
\par\end{centering}
\end{figure}

\section{The $H$-subdivision game}\label{section4}

We will now prove Theorem \ref{subdiv}. For the upper bound simply observe that, if {\tt CleverMaker} wins, then $\Delta(M)\geq \Delta(H)$. Therefore, by Lemma \ref{deglem} with $k=\Delta(H)$, {\tt CleverBreaker} wins with bias $b=\frac{2n}{\Delta(H)-1}$.

For the lower bound, let $b=\frac{(1-\epsilon)n}{\Delta(H)-1}$. The key tool is a sufficient condition for the existence of $H$-subdivisions, given by Mader \cite{Mad}:
\begin{theo}\label{mader} Let $H$ be a graph with $\Delta(H)\geq 3$ and let $\delta>0$. There exists an integer $\ell(H,\delta)$ such that every graph with girth at least $\ell$ and average degree at least $\Delta(H)-1+\delta$ contains a subdivision of $H$.\end{theo}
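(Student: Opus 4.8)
The plan is to prove Theorem~\ref{mader} in the spirit of Mader's original argument: build a subdivision of $H$ by placing its branch vertices one at a time and routing the subdividing paths one at a time, using the large girth to guarantee that at each step there is enough ``fresh'' structure in $G$ to continue. Write $r=\Delta(H)$ and $k=v(H)$, fix an ordering $v_1,\dots,v_k$ of $V(H)$, and let $\ell$ be a large constant depending on $r$ and $\delta$ (hence on $H$) to be pinned down at the very end.

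First I would run a few cleaning reductions. Passing to the $2$-core does not decrease the average degree, since $r-1+\delta\ge 2$, so we may assume $\delta(G)\ge 2$; suppressing all vertices of degree $2$, while recording the length of each suppressed path, only increases the average degree and yields a multigraph $G'$ with $\delta(G')\ge 3$ in which every cycle that is short \emph{in $G'$} carries total length at least $\ell$ (because it lifts to a cycle of $G$), and is therefore ``harmless''. The key structural input is that the slack $\delta$ forces many vertices of large degree that are moreover spread out: from $\sum_v(d(v)-(r-1))\ge \delta n$ one gets that the total excess degree carried by vertices of degree $\ge r$ is at least $\delta n$, and since $\delta(G')\ge 3$ makes balls branch at rate at least $2$, two such vertices cannot sit close together without creating a short cycle in $G$; making this quantitative should produce a set $U$ of $\Omega_{r,\delta}(n)$ vertices of $G'$ of degree $\ge r$ that are pairwise far apart.

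The embedding is then inductive. Suppose a partial subdivision $S$ of $H[\{v_1,\dots,v_i\}]$ has been built inside $G'$ using at most $C(H)$ vertices, with the correct number of spare incidences reserved at each branch vertex $x_1,\dots,x_i$. Choose the next branch vertex $x_{i+1}\in U\setminus V(S)$ (possible since $|U|=\Omega(n)$ dwarfs $|V(S)|$), noting $d_{G'}(x_{i+1})\ge r\ge d_H(v_{i+1})$. For each edge $v_jv_{i+1}\in E(H)$ with $j\le i$, route an internally disjoint path from $x_j$ to $x_{i+1}$ inside $G'-V(S)$: grow a tree from $x_j$ through a reserved spare incidence and a tree from $x_{i+1}$, both into the untouched part of $G'$; since they branch at rate at least $2$ they reach $\gg\sqrt n$ vertices after a bounded number of steps and hence must intersect, and the length bookkeeping (equivalently, the large girth of $G$) lets one prune the resulting walk to a genuine short path on fresh vertices, disjoint from the boundedly many paths already routed. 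When $i$ reaches $k$, de-suppressing the degree-$2$ vertices turns $S$ into a subdivision of $H$ inside $G$.

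The step I expect to be the main obstacle is the routing of the disjoint paths at a branch vertex $x_{i+1}$ whose degree is only exactly $d_H(v_{i+1})$: there is no slack to waste an incidence, so one must show that a bounded-radius neighbourhood of $x_{i+1}$ in $G'-V(S)$ is effectively $d_H(v_{i+1})$-connected to the rest of the graph, and that the several trees grown out of $x_{i+1}$ towards the already-placed $x_j$'s can be made pairwise disjoint. This is precisely where $\ell=\ell(r,\delta)$ is determined: $\ell$ must be large enough (polynomial in $1/\delta$ and in $r$, roughly) that the degree excess cannot be ``hidden'' near short cycles and that the exponentially branching trees have room to find one another, and it is the technical heart of the argument. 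The surrounding bookkeeping -- that the number of vertices used stays bounded in terms of $H$, that ``avoid $V(S)$'' is always achievable, and that suppression and de-suppression are reversible -- is routine given the counting above.
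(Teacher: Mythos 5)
This statement is not proved in the paper at all: it is Mader's theorem, quoted as a black-box ingredient (the citation \cite{Mad}), so there is no in-paper argument to compare against, and your sketch would have to stand on its own as a full proof of a known but genuinely hard result. It does not. Two of your intermediate claims are false as stated. First, the excess-degree count $\sum_{v:\,d(v)\geq r}\bigl(d(v)-(r-1)\bigr)\geq\delta n$ does not give $\Omega_{r,\delta}(n)$ vertices of degree at least $r$: a single vertex of degree $\Theta(n)$ (or $\delta\sqrt n$ vertices of degree $\sqrt n$) can carry the entire excess, so $U$ may have bounded size. Second, large girth does not force high-degree vertices to be pairwise far apart: two (or many) vertices of degree $\geq r$ can be mutually adjacent without creating any short cycle, since a short cycle needs two distinct short paths between the same pair of vertices; so the ``spread-out $U$'' step fails even qualitatively.

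The routing step is also not an argument. Growing trees that ``branch at rate at least $2$'' is only guaranteed up to radius about $\ell/2$, which is a constant, so after a bounded number of steps you reach a bounded number of vertices, not $\gg\sqrt n$; beyond that radius the girth hypothesis gives no control, and minimum degree $3$ alone does not give expansion (the graph may have tiny separators). Moreover, even if two vertex sets each had size $\sqrt n$, they need not intersect -- the birthday heuristic applies to random sets, not to BFS trees in an adversarial graph. Finally, the part you yourself flag as the main obstacle -- routing $d_H(v_{i+1})$ internally disjoint paths out of a branch vertex whose degree may be exactly $d_H(v_{i+1})$, avoiding the partial subdivision and each other -- is precisely the technical heart of Mader's proof and is left entirely unresolved. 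As it stands the proposal is an outline whose supporting structural claims fail and whose central step is missing; for the purposes of this paper the correct move is simply to invoke Mader's theorem, as the author does.
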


The game lasts for $\left\lceil\frac{{n \choose 2}}{b+1}\right\rceil\geq \frac{\Delta(H)-1+4\delta}{2}n$ rounds for some $\delta=\delta(\epsilon)$, so this is the number of edges claimed by {\tt RandomMaker}. Let $\ell=\ell(H,\delta)$ be as in Theorem \ref{mader}. We will prove that whp {\tt RandomMaker}'s graph $M$ contains a subgraph $M'$ with girth at least $\ell$ and average degree at least $\Delta(H)-1+\delta$.

We define the subgraph $M'$ as follows: on every round, we add $e_t$ to $M'$ unless it is incident to a vertex that already has degree at least $\log n$ in $M'$ or it creates a cycle in $M'$ of length smaller than $\ell$. By construction, the girth of $M'$ is at least $\ell$. We will now prove that whp $e(M')\geq \frac{\Delta(H)-1+\delta}{2}n$.

On round $t$, with $t\leq\frac{\Delta(H)-1+2\delta}{2}n$, there are at least $\delta nb=\frac{\delta(1-\epsilon)n^2}{\Delta(H)-1}$ free edges. There are at most $\frac{\Delta(H)-1+2\delta}{\log n}n$ vertices of degree $\log n$ in $M_t$, so there are at most $\frac{\Delta(H)-1+2\delta}{\log n}n^2$ free edges incident to one of them.  On the other hand, there are at most $(\Delta(M'))^\ell\leq (\log n)^\ell$ vertices at distance at most $\ell$ from a given vertex, so the number of free edges that would close a cycle of length shorter than $\ell$ is at most $n(\log n)^\ell$. The probability that a randomly selected free edge is not added to $M'$ is at most $\frac{\frac{\Delta(H)-1+2\delta}{\log n}n^2+n(\log n)^\ell}{\frac{\delta (1-\epsilon) n^2}{\Delta(H)-1}}=o(1)$, and by Markov's inequality the probability that $\frac\delta 2n$ edges fail to be added to $M'$ in the first $\frac{\Delta(H)-1+2\delta}{2}n$ rounds is also $o(1)$.

We conclude that whp the subgraph $M'$ of {\tt RandomMaker}'s graph satisfies the conditions of Theorem \ref{mader} and so contains a subdivision of $H$.

\section{Concluding remarks}

In both the $H$-minor and the $H$-subdivision games, the random strategy for Breaker is close to being optimal. This appears to be the case in many different games in which the property $\mathcal{P}$ is hereditary increasing: if $G_1$ and $G_2$ are graph on $n'$ and $n$ vertices, respectively, with $n'\leq n$ and $G_1\subset G_2$, then $G_1\in \mathcal P$ implies $G_2\in\mathcal P$. Indeed, compare these two games and the $H$-subgraph game to the Hamiltonicity game, in which $b^*_{CC}\approx\frac{n}{\log n}$ but {\tt CleverBreaker} wins against {\tt RandomMaker} whp for $b=1$ (see \cite{GroSza}, \cite{KriKro}). It is not known whether for every hereditary increasing property $\mathcal{P}$ it is true that $b^*_{CC}\sim b^*_{RC}$.

The question of when does a game satisfy $b^*_{CC}\approx b^*_{RC}$ seems to be harder to answer. The $H$-minor games for $H$ in Class 1, Class 2 and Class 4 do not seem to fit into any general criterion for which this phenomenon holds, or can be conjectured to hold.

\section{Acknowledgements}

The content of this paper is based on my Master's thesis. I would like to thank my advisor, Tibor Szab\'o, for suggesting this topic and for the discussions that we had during the elaboration of the thesis, defense preparation and elaboration of this paper.

\bibliography{bibliog}{}
\bibliographystyle{abbrv}
~\\
~\\
\begin{minipage}[t]{0.5\linewidth}
	Ander Lamaison\\ 
	\texttt{<lamaison@zedat.fu-berlin.de>}\\
	Institut f\"ur Mathematik, Freie Universit\"at Berlin and Berlin Mathematical School, Berlin, Germany. 
\end{minipage}

\end{document}